\providecommand{\algorithmname}{Algorithm}
\numberwithin{equation}{section}
\numberwithin{figure}{section}
\theoremstyle{plain}
\newtheorem{thm}{\protect\theoremname}
  \theoremstyle{plain}
  \newtheorem{lem}[thm]{\protect\lemmaname}
  \theoremstyle{plain}
  \newtheorem{prop}[thm]{\protect\propositionname}
\let\tempone\itemize
\let\temptwo\enditemize
\renewenvironment{itemize}{\tempone\addtolength{\itemsep}{0.3\baselineskip}}{\temptwo}
  \providecommand{\lemmaname}{Lemma}
  \providecommand{\propositionname}{Proposition}
\providecommand{\theoremname}{Theorem}
\begin{document}

\title{Permutation polynomials of degree $8$ over finite fields of odd characteristic}

\author{Xiang Fan}

\email{fanx8@mail.sysu.edu.cn}

\address{School of Mathematics, Sun Yat-sen University, Guangzhou 510275, China}
\begin{abstract}
This paper provides an algorithmic generalization of Dickson's method
of classifying permutation polynomials (PPs) of a given degree $d$
over finite fields. Dickson's idea is to formulate from Hermite's
criterion several polynomial equations satisfied by the coefficients
of an arbitrary PP of degree $d$. Previous classifications of PPs
of degree at most $6$ were essentially deduced from manual analysis
of these polynomial equations. However, these polynomials, needed
for that purpose when $d>6$, are too complicated to solve. Our idea
is to make them more solvable by calculating some radicals of ideals
generated by them, implemented by a computer algebra system (CAS).
Our algorithms running in SageMath 8.6 on a personal computer work
very fast to determine all PPs of degree $8$ over an arbitrary finite
field of odd order $q>8$. The main result is that for an odd prime
power $q>8$, a PP $f$ of degree $8$ exists over the finite field
of order $q$ if and only if $q\leqslant 31$ and $q\not\equiv 1\ (\mathrm{mod}\ 8)$, 
and $f$ is explicitly listed up to linear transformations.
\end{abstract}

\keywords{Permutation polynomial; Hermite's criterion; Carlitz conjecture; SageMath}

\subjclass[2000]{11T06, 12Y05}

\maketitle

\section{\label{sec:Intr} Introduction}

Denote by $\mathbb{F}_{q}$ the finite field of order $q$, and write
$\mathbb{F}_{q}^{*}=\mathbb{F}_{q}\backslash\{0\}$. Reserve the letter
$x$ for the variable of the univariate polynomial ring $\mathbb{F}_{q}[x]$
over $\mathbb{F}_{q}$. An arbitrary map from $\mathbb{F}_{q}$ to
itself can be represented as $(a\in\mathbb{F}_{q}\mapsto f(a))$ by
a polynomial $f$ in $\mathbb{F}_{q}[x]$. We call $f$ is a \emph{permutation
polynomial} (PP) over $\mathbb{F}_{q}$ if it represents a permutation
of $\mathbb{F}_{q}$.

Rooted in Hermite \citep{Hermite1863sur} and Dickson \citep{Dickson1897analytic}
in the nineteenth century, the study on PPs over finite fields has
aroused a growing interest, partially due to its valuable applications
in other areas of mathematics and engineering, such as cryptography,
coding theory, combinatorial designs and so on. For example, a special
class of PPs called Dickson polynomials (introduced by \citep{Dickson1897analytic})
played a key role in Ding and Yuan's breakthrough construction \citep{DingYuan2006family}
of a new family of skew Hadamard difference sets in combinatorics.

Although dozens of classes of PPs (with good appearance or properties)
have been found (see \citep{Mullen2013Handbook,Hou2015survey} for
recent surveys), the basic problem of classification of PPs of prescribed
forms is still challenging. In his pioneering thesis work \citep{Dickson1897analytic}
on PPs, L.~E.~Dickson discussed the classification of all PPs of
a given degree $d$ over an arbitrary finite field $\mathbb{F}_{q}$.
Replacing PPs by their reductions modulo $x^{q}-x$ if necessary,
it is assumed that $d<q$. Up-to-date results on this classification
are as follows:
\begin{itemize}
\item by Dickson's 1896 thesis \citep{Dickson1897analytic} for $d\leqslant5$
with any $q$, and for $d=6$ with any odd $q$;
\item by Li, Chandler and Xiang \citep{LiChandlerXiang2010permutation}
in 2010 for $d=6$ or $7$ with any even $q$;
\item by the author\textquoteright s recent \citep{Fan2019PP7} for $d=7$
with any odd $q$, and \citep{Fan2019PP8p2} for $d=8$ with any even
$q$.
\end{itemize}
The present paper contributes to this line by classifying all PPs
of degree $8$ over an arbitrary $\mathbb{F}_{q}$ of odd order $q>8$.
More generally, we actually provide an algorithmic generalization
of Dickson's method of classifying PPs of a given degree $d$ over
finite fields. Dickson's idea is to formulate from Hermite's criterion
several polynomial equations satisfied by the coefficients of an arbitrary
PP of degree $d$. Previously known classifications of PPs of degree
at most $6$ were essentially deduced from manual analysis of these
polynomial equations. However, these polynomials, needed for that
purpose when $d>6$, are too complicated to solve. Our idea is to
make them more solvable by calculating some  radicals of ideals generated
by them, implemented by a computer algebra system (CAS). Our algorithms
running in SageMath 8.6 on a personal computer work very fast to determine
all PPs of degree $8$ over finite fields of odd order, as described
below.
\begin{thm}
For an odd prime power $q>8$, PPs of degree $8$ exist over $\mathbb{F}_{q}$
if and only if
\[
q\in\{11,13,19,23,27,29,31\}.
\]
Moreover, PPs of degree $8$ in normalized form over finite fields
are explicitly listed in Propositions \ref{prop:31}, \ref{prop:29},
\ref{prop:27}, \ref{prop:23}, \ref{prop:19}, \ref{prop:13} and
\ref{prop:11}.
\end{thm}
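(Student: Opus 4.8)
The plan is to run the algorithmic version of Dickson's method described above, specialized to degree $d=8$ with $q$ odd. First I would normalize: composing a PP with affine maps on the input and on the output preserves both the permutation property and the degree (the latter because $8$ is invertible in every odd characteristic), so I may assume $f = x^8 + c_6 x^6 + c_5 x^5 + c_4 x^4 + c_3 x^3 + c_2 x^2 + c_1 x$, leaving six unknowns $c_1,\dots,c_6$. Everything below is read "up to linear transformations," matching the phrasing of the statement, and the evenness of the degree signals (in the spirit of the Carlitz conjecture) that existence should be confined to small $q$; the task is to pin the bound down sharply.

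It is convenient to split the "only if" direction according to $q \bmod 8$. For $q \equiv 1 \pmod 8$ there is a one-line obstruction from Hermite's criterion: set $t = (q-1)/8$, an integer with $1 \le t \le q-2$ and $p \nmid t$ (since $q \equiv -1 \pmod p$ forces $p \nmid q-1$); then $\deg f^t = 8t = q-1 < q$, so $f^t$ needs no reduction modulo $x^q-x$ and its coefficient of $x^{q-1}$ is the leading coefficient $1 \ne 0$, violating Hermite's degree condition. Hence no PP of degree $8$ exists when $8 \mid q-1$, which eliminates $q = 9, 17, 25$ and every larger $q \equiv 1 \pmod 8$.

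For $q \not\equiv 1 \pmod 8$ I would extract the Hermite equations in $c_1,\dots,c_6$. For small $t$ the reduced power $f^t \bmod (x^q-x)$ folds over only once, so the vanishing of its $x^{q-1}$-coefficient reads off a single monomial-sum in the $c_i$ whose multinomial coefficients are binomial polynomials in $t$, hence polynomials in $q$. Grouping $q$ by its residue $r = q \bmod 8 \in \{3,5,7\}$ and sweeping a handful of values $t \approx (q-1)/8$ yields, for each $r$, a system of polynomial equations in $c_1,\dots,c_6$ whose coefficients are explicit polynomials in the parameter $q$. The crux — and where a CAS is indispensable — is to solve these systems uniformly in $q$: I would adjoin $q$ as an indeterminate, compute a Gröbner basis and the radical of the ideal they generate, and read off that once $q$ exceeds $31$ the only common zeros are degenerate, i.e.\ they fail a further Hermite condition or the exactly-one-root requirement, so no genuine PP survives. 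This is precisely the step that manual analysis cannot reach for $d=8$; the small characteristics $p \in \{3,5,7\}$, where the $q$-dependent coefficients may collapse, must be handled as separate specializations (this is what brings in the single non-prime value $q = 27$).

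Finally, for each surviving $q \in \{11,13,19,23,27,29,31\}$ the computation returns finitely many candidate coefficient vectors, and I would verify directly (again through Hermite, or simply by evaluating $f$ on all of $\mathbb{F}_q$) that each represents a permutation; these are the polynomials recorded in Propositions \ref{prop:31}--\ref{prop:11}, which establishes the "if" direction and completes the explicit list. I expect the main obstacle to be controlling the $q$-parametrized ideals: guaranteeing that the finitely many Hermite conditions actually used suffice to certify non-existence for all large $q$ simultaneously, rather than only for the finitely many values one happens to test.
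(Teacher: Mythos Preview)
Your outline matches the paper's overall strategy in its broad strokes---normalization, the instant Hermite obstruction when $8\mid q-1$, radical/Gr\"obner computations on the Hermite equations, and a final exhaustive search for small $q$---but it diverges from the paper at exactly the point you flag as the ``main obstacle,'' and that divergence is a genuine gap.

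You propose to adjoin $q$ as an indeterminate, compute a Gr\"obner basis of the Hermite ideal over something like $\mathbb{Q}(q)[x_1,\dots,x_6]$, and then ``read off'' non-existence for all $q>31$. The difficulty you anticipate is real and you do not resolve it: the coefficients of $\mathbf{HC}_8(q,m)$ live in $\mathbb{F}_q$, whose characteristic $p$ varies with $q$ over infinitely many primes, so there is no single base ring in which a finite Gr\"obner computation certifies the claim for all large $q$ at once. A relation that is generically nonzero in your parametric model may vanish modulo any particular $p$, and conversely; handling $p\in\{3,5,7\}$ ``as separate specializations'' does not address the infinitely many remaining primes. Without an additional ingredient, the uniform argument does not close.

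The paper supplies exactly that missing ingredient: it first invokes an \emph{a priori} effective Carlitz--Wan bound (Lemma~\ref{lem:Carlitz-Wan}, sharpened to $C_8=919$ via a Weil-type estimate) to reduce the ``only if'' direction to the finite list of odd prime powers $31<q\le 919$. Only then does it compute radicals---\emph{separately for each such $q$}, in $\mathbb{F}_q[x_1,\dots,x_6]$---via Algorithms~\ref{alg:HC8}, \ref{alg:Rad8}, \ref{alg:HC8new}, \ref{alg:non-ex}. No $q$-parametric ideal ever appears. Your proposal would become a complete plan if you replaced the uniform Gr\"obner step by this two-stage argument: external bound first, then finitely many per-$q$ radical computations.
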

It is worth to mention that all previous classifications of PPs of
degree at most $7$ can be recovered very quickly by similar algorithms
in our approach here, with calculations implemented by a personal
computer. This approach is different from that in the author's \citep{Fan2019PP7}
classifying PPs of degree $7$. Roughly speaking, \citep{Fan2019PP7}
uses only two simple equations provided by Hermite's criterion, and
its main algorithm is a brute-force search (though optimized by linear
transformations), which cannot work out for degree $8$ and $q>100$
in an acceptable period of time. On the contrary, the approach here
will also work for degree a little larger than $8$ under the support
of a personal computer. We have already done some computations for
degree $9$, and is writing the results in a forthcoming paper.

The structure of this paper is as follows. Section \ref{sec:HC} establishes
Algorithm \ref{alg:HC8} for explicit polynomial equations on coefficients
of PPs of degree $8$ by Hermite\textquoteright s criterion. Section
\ref{sec:Non-exist} verifies the non-existence of PPs of degree $8$
over finite fields of odd order $q>31$, by calculations of some radicals
of ideals generated by polynomials provided by Algorithm \ref{alg:HC8}.
Section \ref{sec:Results} explicitly lists all PPs of degree $8$
in normalized form over $\mathbb{F}_{q}$ of odd order $q$ such that
$8<q\leqslant31$, by a brute-force search.

\section{\label{sec:HC}Hermite's criterion}

The main tool employed by this paper (and by the above mentioned works)
is \emph{Hermite\textquoteright s criterion} for PPs over finite fields
on their coefficients. Introduced by Dickson \citep{Dickson1897analytic}
as a generalization of its prime field case in Hermite \citep{Hermite1863sur},
this criterion is usually named after Hermite, but also sometimes
called Hermite-Dickson criterion. We state here an explicit version
of it from \citep{LidlNiederreiter1997book}, with the following notations
specified.

Let $\mathbb{N}=\{n\in\mathbb{Z}:n\geqslant0\}$. For $n\in\mathbb{N}$
and $f\in\mathbb{F}_{q}[x]$, denote by $[x^{n}:f]$ the coefficient
of $x^{n}$ in $f(x)$. In other words, for a nonzero $f\in\mathbb{F}_{q}[x]$,
we have $f(x)=\sum_{n=0}^{\deg(f)}[x^{n}:f]\cdot x^{n}$, where $\deg(f)=\max\{n\in\mathbb{N}:[x^{n}:f]\neq0\}$.
For $t\in\mathbb{R}$, let $\lfloor t\rfloor$ indicate the largest
integer $\leqslant t$.
\begin{lem}
[{Hermite\textquoteright s criterion \citep[Theorem 7.6]{LidlNiederreiter1997book}}]\label{lem:HC}
Let $f\in\mathbb{F}_{q}[x]$. A necessary and sufficient condition
for $f$ to be a PP over $\mathbb{F}_{q}$ is that 
\[
\sum_{w=1}^{\lfloor\frac{\deg(f^{m})}{q-1}\rfloor}[x^{w(q-1)}:f^{m}]\begin{cases}
=0 & \text{for }1\leqslant m\leqslant q-2,\\
\neq0 & \text{for }m=q-1.
\end{cases}
\]

\end{lem}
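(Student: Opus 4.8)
The plan is to convert the stated coefficient sums into genuine power sums over $\mathbb{F}_q$ and then characterize permutations by a surjectivity argument. First I would record the elementary evaluation
\[
\sum_{a\in\mathbb{F}_q}a^{n}=\begin{cases}-1 & \text{if }n>0\text{ and }(q-1)\mid n,\\ 0 & \text{otherwise,}\end{cases}
\]
(with the convention $0^{0}=1$), which follows by summing the geometric progression $\sum_{a\in\mathbb{F}_q^{*}}a^{n}$ using a generator of $\mathbb{F}_q^{*}$. Writing $f^{m}=\sum_{n}[x^{n}:f^{m}]\,x^{n}$ and summing over $a\in\mathbb{F}_q$, this evaluation collapses the double sum to
\[
\sum_{a\in\mathbb{F}_q}f(a)^{m}=-\sum_{w=1}^{\lfloor\deg(f^{m})/(q-1)\rfloor}[x^{w(q-1)}:f^{m}].
\]
Hence the displayed conditions are exactly equivalent to $\sum_{a\in\mathbb{F}_q}f(a)^{m}=0$ for $1\leqslant m\leqslant q-2$ together with $\sum_{a\in\mathbb{F}_q}f(a)^{q-1}\neq0$, and it suffices to characterize PPs by these power-sum conditions.

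For necessity, if $f$ is a PP then $\{f(a):a\in\mathbb{F}_q\}=\mathbb{F}_q$, so $\sum_{a}f(a)^{m}=\sum_{c\in\mathbb{F}_q}c^{m}$, which by the evaluation above is $0$ for $1\leqslant m\leqslant q-2$ and equals $-1\neq0$ for $m=q-1$. This direction is immediate.

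The substance is sufficiency, and here the key device is the indicator identity: for $y\in\mathbb{F}_q$ one has $1-y^{q-1}=1$ if $y=0$ and $0$ otherwise. Fix $b\in\mathbb{F}_q$ and let $v(b)$ denote the integer $|\{a\in\mathbb{F}_q:f(a)=b\}|$. Expanding by the binomial theorem,
\[
\sum_{a\in\mathbb{F}_q}(f(a)-b)^{q-1}=\sum_{m=0}^{q-1}\binom{q-1}{m}(-b)^{q-1-m}\sum_{a\in\mathbb{F}_q}f(a)^{m}.
\]
Since $\sum_{a}f(a)^{0}=q=0$ in $\mathbb{F}_q$ and $\sum_{a}f(a)^{m}=0$ for $1\leqslant m\leqslant q-2$ by hypothesis, every term with $m\leqslant q-2$ vanishes, leaving the single value $s:=\sum_{a}f(a)^{q-1}$, which is nonzero. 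On the other hand the left-hand side equals $\sum_{a:\,f(a)\neq b}1=q-v(b)\equiv-v(b)\pmod p$. Thus $v(b)\equiv-s\pmod p$ for every $b$, a fixed residue that is nonzero precisely because $s\neq0$.

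I would finish with a pigeonhole step: each $v(b)$ is a nonnegative integer lying in a fixed nonzero residue class modulo $p$, so $v(b)\geqslant1$ for all $b$, meaning $f$ is surjective; a surjection of the finite set $\mathbb{F}_q$ onto itself is a bijection, so $f$ is a PP. The main obstacle is arranging the sufficiency direction correctly: one should resist trying to reconstruct the multiset of values from the power sums via Newton's identities (which behave badly in characteristic $p$), and instead exploit the observation that merely forcing every fibre to be nonempty already yields bijectivity. Some care is also required with the reductions modulo $p$ and with the convention $0^{0}=1$ in the evaluation of $\sum_{a}a^{n}$.
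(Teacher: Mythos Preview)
The paper does not give its own proof of this lemma; it is quoted as Theorem~7.6 of Lidl--Niederreiter and used as a black box. So there is nothing in the paper to compare against beyond the bare statement.

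Your argument is correct and is essentially the classical proof. The two ingredients are exactly the right ones: the identity $\sum_{a\in\mathbb{F}_q}a^{n}=-1$ or $0$ converts the coefficient sums in the statement into the power sums $\sum_{a}f(a)^{m}$, and the indicator $1-y^{q-1}$ turns the binomial expansion of $\sum_{a}(f(a)-b)^{q-1}$ into a fibre count. The conclusion $v(b)\equiv -s$ in $\mathbb{F}_q$ with $s\neq 0$ then forces $v(b)\geqslant 1$ for every $b$, which is enough. One small remark on presentation: $s$ lives in $\mathbb{F}_q$, so ``$v(b)\equiv -s\pmod p$'' should be read as the equality of the image of the integer $v(b)$ with $-s$ inside $\mathbb{F}_q$; since that image lies in the prime subfield, $s$ is automatically in $\mathbb{F}_p$ and nonvanishing there, so $p\nmid v(b)$. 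You handled the edge cases ($m=0$ term, the convention $0^{0}=1$, the case $b=0$) correctly.
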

Let us show how to calculate $[x^{n}:f^{m}]$ explicitly via multinomial
coefficients. Consider a polynomial $f$ of degree $d$ in $\mathbb{F}_{q}[x]$.
Suppose $\gcd(d,q)=1$ (noting that we aim for $d=8$ with an odd
$q$). By linear transformations, we may assume that $f$ is in normalized
form, i.e. $f(x)=x^{d}+\sum_{i=1}^{d-2}a_{i}x^{i}$ with all $a_{i}\in\mathbb{F}_{q}$.
For integers $j$, $j_{1}$, $j_{2}$, $\dots$, and $j_{d}$, define
the associated \emph{multinomial coefficient} as 
\[
\binom{j}{j_{1},j_{2},\dots,j_{d}}:=\begin{cases}
\dfrac{j!}{j_{1}!j_{2}!\cdots j_{d}!} & \text{if }j=j_{1}+j_{2}+\cdots+j_{d}\text{ and all }j_{1},\dots,j_{d}\geqslant0,\\
0 & \text{otherwise}.
\end{cases}
\]
By the multinomial theorem, 
\[
f(x)^{m}=\sum_{\sum_{i=1}^{d-2}j_{i}+j_{d}=m}\binom{m}{j_{1},j_{2},\dots,j_{d-2},j_{d}}(\prod_{i=1}^{d-2}a_{i}^{j_{i}})\cdot x^{\sum_{i=1}^{d-2}ij_{i}+dj_{d}}.
\]
Therefore, 
\begin{align*}
[x^{n}:f(x)^{m}] & =\sum_{\substack{\sum_{i=1}^{d-2}j_{i}+j_{d}=m\\
\sum_{i=1}^{d-2}ij_{i}+dj_{d}=n.
}
}\binom{m}{j_{1},j_{2},\dots,j_{d-2},j_{d}}\prod_{i=1}^{d-2}a_{i}^{j_{i}}\\
 & =\sum_{\sum_{i=1}^{d-2}(d-i)j_{i}=dm-n}\binom{m}{j_{1},j_{2},\dots,j_{d-2},m-\sum_{i=1}^{d-2}j_{i}}\prod_{i=1}^{d-2}a_{i}^{j_{i}}.
\end{align*}
Define a multivariate polynomial $\mathbf{HC}_{d}(q,m)$ in
$\mathbb{F}_{q}[x_{1},x_{2},\dots,x_{d-2}]$ (with $d-2$ variables
$x_{i}$) as
\[
\mathbf{HC}_{d}(q,m):=\sum_{w=1}^{\lfloor\frac{dm}{q-1}\rfloor}\sum_{\sum_{i=1}^{d-2}(d-i)j_{i}=dm-w(q-1)}\binom{m}{j_{1},j_{2},\dots,j_{d-2},m-\sum_{i=1}^{d-2}j_{i}}\prod_{i=1}^{d-2}x_{i}^{j_{i}}.
\]
Then Hermite\textquoteright s criterion asserts that $f(x)=x^{d}+\sum_{i=1}^{d-2}a_{i}x^{i}$
(with all $a_{i}\in\mathbb{F}_{q}$) is a PP over $\mathbb{F}_{q}$
if and only if 
\[
\mathbf{HC}_{d}(q,m)(a_{1},a_{2},\dots,a_{d-2})\begin{cases}
=0 & \text{for }1\leqslant m\leqslant q-2,\\
\neq0 & \text{for }m=q-1.
\end{cases}
\]

\begin{itemize}
\item When $q\equiv1\ (\mathrm{mod}\ d)$, no PP of degree $d$ exists over
$\mathbb{F}_{q}$ because $\mathbf{HC}_{d}(q,\frac{q-1}{d})=1$.
\item If $q\not\equiv0,1\ (\mathrm{mod}\ d)$, then $\mathbf{HC}_{d}(q,m)=0$
when $m\leqslant\lfloor\frac{q}{d}\rfloor=\lfloor\frac{q-1}{d}\rfloor<\frac{q-1}{d}$.
\end{itemize}

When $\gcd(d,q)=1$, previous classifications of PPs of degree $d\leqslant6$
were essentially deduced from manual analysis of the polynomial equations
$\mathbf{HC}_{d}(q,m)(a_{1},a_{2},\dots,a_{d-2})=0$ on $a_{i}$'s
provided by Hermite's criterion. Roughly speaking, $d-2$ equations
$\mathbf{HC}_{d}(q,m)(a_{1},a_{2},\dots,a_{d-2})=0$ for $\lfloor\frac{q}{d}\rfloor+1\leqslant m\leqslant\lfloor\frac{q}{d}\rfloor+d-2$
are enough to determine $(a_{1},a_{2},\dots,a_{d-2})\in\mathbb{F}_{q}^{d-2}$
when $q>d(d-2)$. However, when $d>6$, these $d-2$ polynomials $\mathbf{HC}_{d}(q,m)$
are too long to write down explicitly, let alone to solve. Our main
idea is to show the non-existence of $(a_{1},\dots,a_{d-2})$ for
$q>d(d-2)$, by calculating some radicals of ideals generated by some
$\mathbf{HC}_{d}(q,m)$, implemented by a computer algebra system
(CAS) running on a personal computer. This idea works at least for
$d\leqslant8$, provided that $\gcd(d,q)=1$.

In practice, all algorithms of this paper runs in SageMath \citep{SageMath}
(version 8.6), a free open-source CAS combining the power of many
existing open-source packages, such as NumPy, SciPy, Sympy, Maxima,
R, GAP, \textsc{Singular} and many more, into a common Python-based
interface. SageMath uses a Python-like language, which is very readable
even for those without programming experience.

For $d=8$, the multivariate polynomial $\mathbf{HC}_{8}(q,m)$ in
$\mathbb{F}_{q}[x_{1},x_{2},\dots,x_{6}]$ is defined as 
\[
\mathbf{HC}_{8}(q,m):=\sum_{\substack{7j_{1}+6j_{2}+5j_{3}+4j_{4}+3j_{5}+2j_{6}=8m-n\\
n\in\{w(q-1):\ 1\leqslant w\leqslant\lfloor\frac{8m}{q-1}\rfloor\}
}
}\binom{m}{j_{1},j_{2},\dots,j_{6},m-\sum_{i=1}^{6}j_{i}}\prod_{i=1}^{6}x_{i}^{j_{i}}.
\]
Algorithm \ref{alg:HC8} realizes $\mathbf{HC}_{8}(q,m)$ as a SageMath
function $\mathtt{HC8}(q,m)$, outputting a multivariate polynomial
in $\mathbb{F}_{q}[x_{1},x_{2},\dots,x_{6}]$.

\begin{algorithm}[h]
\protect\caption{\label{alg:HC8}To calculate $\mathbf{HC}_{8}(q,m)$ in $\mathbb{F}_{q}[x_{1},x_{2},\dots,x_{6}]$}

\begin{lstlisting}
def HC8(q,m):
    HC = 0; K.<x1,x2,x3,x4,x5,x6> = PolynomialRing(GF(q))
    for n in range(q-1,8*m+1,q-1):
        u = 8*m-n
        for j1 in range(u//7+1):
            for j2 in range((u-7*j1)//6+1):
                for j3 in range((u-7*j1-6*j2)//5+1):
                    for j4 in range((u-7*j1-6*j2-5*j3)//4+1):
                        for j5 in range((u-7*j1-6*j2-5*j3-4*j4)//3+1):
                            v = u-7*j1-6*j2-5*j3-4*j4-3*j5
                            if is_odd(v): continue
                            j6 = v//2; j = j1+j2+j3+j4+j5+j6
                            if j>m: continue
                            c = int(multinomial(j1,j2,j3,j4,j5,j6,m-j))
                            HC += c*x1^j1*x2^j2*x3^j3*x4^j4*x5^j5*x6^j6
    return HC
\end{lstlisting}
\end{algorithm}

\section{\label{sec:Non-exist}Non-existence for odd $q>31$}

In an address before the Mathematical Association of America in 1966,
L.~Carlitz conjectured a constant $C_{n}$ for each positive even
integer $n$ such that no PP of degree $n$ exists over $\mathbb{F}_{q}$
of odd order $q>C_{n}$. If $\gcd(n,q)=1$, it is verified by Hayes
\citep{Hayes1967geometric} with a stronger result as follows.
\begin{lem}
\label{lem:Carlitz-Wan} \textup{\citep[Theorem 3.4]{Hayes1967geometric}}
Given a positive integer $n$, there is a constant $C_{n}$ (depending
only on $n$) such that for any prime power $q>C_{n}$ with $\gcd(n,q)=1$,
a PP of degree $n$ exists over $\mathbb{F}_{q}$ only if $\gcd(n,q-1)=1$.
\end{lem}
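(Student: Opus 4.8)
The plan is to prove the contrapositive: for $q$ larger than a bound $C_n$ depending only on $n$, the condition $\gcd(n,q-1)>1$ forces the non-existence of any PP of degree $n$ over $\mathbb{F}_q$. The easy half is already visible in the excerpt. When $n\mid q-1$ one takes $m=(q-1)/n$ in Hermite's criterion (Lemma \ref{lem:HC}): then $f^m$ has degree exactly $q-1$ with leading coefficient $1$, so $\mathbf{HC}_n(q,m)=1\ne 0$, contradicting the PP condition. This device collapses as soon as $e:=\gcd(n,q-1)$ is a \emph{proper} divisor of $n$, since then no single power $f^m$ with $1\le m\le q-2$ has degree a multiple of $q-1$ with a controllable top coefficient, and the coefficient conditions of Hermite are satisfied automatically by any genuine permutation (the power sums of $f$ agree with the power sums over all of $\mathbb{F}_q$). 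Hence the general divisibility statement is not detectable coefficient-by-coefficient and must be extracted geometrically.

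First I would attach to a candidate PP $f$ the bivariate polynomial
\[
\Phi(x,y)=\frac{f(x)-f(y)}{x-y}\in\mathbb{F}_q[x,y],
\]
of total degree $n-1$, and record the elementary equivalence: $f$ is injective on $\mathbb{F}_q$ if and only if the affine curve $\Phi=0$ has no $\mathbb{F}_q$-point off the diagonal $x=y$ (an off-diagonal zero is exactly a pair $x\ne y$ with $f(x)=f(y)$). The engine is then the Weil bound in its Lang--Weil form, tolerating singularities: any absolutely irreducible factor $g$ of $\Phi$ that is \emph{defined over} $\mathbb{F}_q$ carries at least $q-c(n)\sqrt{q}-c'(n)$ rational points, with constants depending only on $\deg g\le n-1$, hence only on $n$. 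Since the diagonal meets $\Phi=0$ in at most $n-1$ points (the zeros of $f'$), choosing $C_n$ so that this lower bound exceeds $n-1$ produces, for all $q>C_n$, an off-diagonal $\mathbb{F}_q$-point and thus a collision $f(x)=f(y)$ with $x\ne y$. Therefore, for $q>C_n$, a PP of degree $n$ can exist only if $\Phi$ has \emph{no} absolutely irreducible $\mathbb{F}_q$-rational factor; in the standard terminology $f$ must be \emph{exceptional}. Note that $C_n$ built this way depends only on $n$, as required.

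The whole problem now reduces to the factorization statement that I expect to be the main obstacle: \textbf{if $\gcd(n,q-1)>1$ then $\Phi$ must possess an absolutely irreducible factor defined over $\mathbb{F}_q$}, so that no exceptional polynomial of degree $n$ survives and the Weil argument bites. The heuristic is transparent from the leading form: modulo lower-order terms $f(x)-f(y)$ has top form $x^n-y^n=\prod_{j=0}^{n-1}(x-\zeta^j y)$ with $\zeta$ a primitive $n$-th root of unity, and the $n$-th roots of unity lying in $\mathbb{F}_q^*$ are precisely $\mu_e$ with $e=\gcd(n,q-1)$; since $e>1$ there are off-diagonal linear forms $x-\omega y$ ($\omega\in\mu_e$, $\omega\ne 1$) that are $\mathbb{F}_q$-rational and so fixed by Frobenius. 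Promoting this Frobenius-stable structure of the leading form to a genuine absolutely irreducible $\mathbb{F}_q$-factor of the full $\Phi$ is the delicate step. I would carry it out by passing to the arithmetic and geometric monodromy groups $A\ge G$ of $f(x)-t$ over $\mathbb{F}_q(t)$, translating ``$\mathbb{F}_q$-rational absolutely irreducible factor of $\Phi$'' into ``a $G$-orbit on off-diagonal pairs of roots that is stabilized by $A$'', and then showing that a prime $\ell\mid\gcd(n,q-1)$ forces such a common orbit, using the Frobenius element that generates the cyclic quotient $A/G$ together with the transitivity of $G$. This monodromy and group-theoretic analysis is the technical heart of Hayes' theorem and is where essentially all the work lies; the curve-counting above is only the bridge carrying it back to the arithmetic of $\mathbb{F}_q$-points.
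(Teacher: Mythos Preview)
The paper does not supply a proof of this lemma at all: it is quoted verbatim as \citep[Theorem~3.4]{Hayes1967geometric} and used as a black box, with only the size of the constant $C_n$ discussed afterwards via \citep{Gathen1991values,ChahalGhorpade2018Carlitz,Fan2019Weil} in order to cut the search down to $q\le 919$. So there is no ``paper's own proof'' to set your attempt against.

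That said, your outline is the right one and is essentially Hayes' original geometric argument: pass to $\Phi(x,y)=(f(x)-f(y))/(x-y)$, note that a PP forbids off-diagonal $\mathbb{F}_q$-points of $\Phi=0$, and use a Weil-type lower bound on any absolutely irreducible $\mathbb{F}_q$-rational component of $\Phi$ to force such a point once $q>C_n$. Your identification $\Phi(x,x)=f'(x)$ and the bound of $n-1$ diagonal points (valid because $\gcd(n,q)=1$ keeps the leading coefficient of $f'$ nonzero) are correct. One comment on the final step you flag as hard: under the standing hypothesis $\gcd(n,q)=1$ you do \emph{not} need the arithmetic/geometric monodromy apparatus you invoke. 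Since the top homogeneous form of $\Phi$ is $\prod_{j=1}^{n-1}(x-\zeta^{j}y)$ with \emph{distinct} linear factors, the absolutely irreducible components of $\Phi$ over $\overline{\mathbb{F}_q}$ are tagged by the blocks of a partition of $\mu_n\setminus\{1\}$, and Frobenius permutes these components compatibly with $\zeta\mapsto\zeta^{q}$ on the tags. If $e=\gcd(n,q-1)>1$, any nontrivial $e$-th root of unity $\omega\in\mathbb{F}_q^{*}\cap(\mu_n\setminus\{1\})$ is a Frobenius fixed point, so the block containing $\omega$ is Frobenius-stable and the corresponding component is already defined over $\mathbb{F}_q$. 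That is an elementary argument; the heavier group theory you sketch is what is needed for the full Carlitz--Wan statement \emph{without} the coprimality assumption $\gcd(n,q)=1$, not for the version cited here.
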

Lemma \ref{lem:Carlitz-Wan} without the assumption $\gcd(n,q)=1$
is called the Carlitz\textendash Wan conjecture, which is now a theorem
by \citep{FriedGuralnickSaxl1993Schur,CohenFried1995Lenstra}. For
Lemma $2$ (and the Carlitz\textendash Wan conjecture) to hold, $C_{n}$
can be taken as $n^{4}$ by von zur Gathen \citep{Gathen1991values},
as $n^{2}(n-2)^{2}$ by Chahal and Ghorpade \citep{ChahalGhorpade2018Carlitz},
and as 
\[
\left\lfloor \left(\dfrac{(n-2)(n-3)+\sqrt{(n-2)^{2}(n-3)^{2}+8n-12}}{2}\right)^{2}\right\rfloor 
\]
by the author's preprint \citep{Fan2019Weil}. Especially, when $n=8$,
the above expression is $925$. The greatest prime power below $925$
is $919$. So $C_{8}$ can be taken as $919$.

This section will further refine the bound $C_{8}$ to $31$ for the
original version of Carlitz conjecture: no PP of degree $8$ exists
over $\mathbb{F}_{q}$ of odd order $q>31$. The main method is to
calculate some radicals of ideals generated by some $\mathbf{HC}_{8}(q,m)$,
with details as follows.

Consider a PP $f\in\mathbb{F}_{q}[x]$ of degree $8$ over $\mathbb{F}_{q}$
of odd order $q=8t+s$ with $1\leqslant t\in\mathbb{Z}$ and $s\in\{3,5,7\}$.
Note that $s\neq1$ by Hermite's criterion. Without loss of loss of
generality, assume that $f$ is in normalized form, i.e. $f(x)=x^{8}+\sum_{i=1}^{6}a_{i}x^{i}$
with all $a_{i}\in\mathbb{F}_{q}$. Hermite's criterion ensures that
$(a_{1},a_{2},\dots,a_{6})\in\mathbb{F}_{q}^{6}$ is a vanishing point
of every polynomial $\mathbf{HC}_{8}(q,m)$ such that $1\leqslant m\leqslant q-2$,
and thus of every polynomial in the radical of any ideal in $\mathbb{F}_{q}[x_{1},x_{2},\dots,x_{6}]$
generated by some of them. Here the \emph{radical} $\sqrt{I}$ of
an ideal $I$ in a ring $R$ is defined as 
\[
\sqrt{I}:=\{g\in R:g^{m}\in I\text{ for some positive }m\in\mathbb{Z}\}.
\]
Especially, for $1\leqslant k\in\mathbb{Z}$, the radical $\mathbf{Rad}_{8}(q,k)$
of the ideal generated by $k$ polynomials $\mathbf{HC}_{8}(q,m)$
with $\lfloor\frac{q}{8}\rfloor+1\leqslant m\leqslant\lfloor\frac{q}{8}\rfloor+k$
can be calculated by SageMath function $\mathtt{Rad8}(q,k)$ in Algorithm
\ref{alg:Rad8}.

\begin{algorithm}[h]
\protect\caption{\label{alg:Rad8}To calculate the radical $\mathbf{Rad}_{8}(q,k)$}

\begin{lstlisting}
def Rad8(q,k):
    return Ideal([HC8(q,q//8+1+i) for i in range(k)]).radical()
\end{lstlisting}
\end{algorithm}

SageMath uses \textsc{Singular} \citep{Singular} to implement the
calculation for radicals of ideals in multivariate polynomial rings
over fields, based on the algorithm of Kempers \citep{Kemper2002calculation}
in positive characteristic.

We pick out the following $\mathbf{Rad}_{8}(q,k)$ for $q>31$ given
by Algorithm \ref{alg:Rad8} in SageMath 8.6. Each output is of the
form $\mathtt{Ideal}(g_{1},g_{2},\dots,g_{s})$, denoting the ideal
generated by $g_{1},g_{2},\dots,g_{s}$ in $\mathbb{F}_{q}[x_{1},x_{2},\dots,x_{6}]$.
By definition, every $g_{i}$ in the output vanishes at $(a_{1},a_{2},\dots,a_{6})\in\mathbb{F}_{q}^{6}$
for any PP of the form $f(x)=x^{8}+\sum_{i=1}^{6}a_{i}x^{i}$ over
$\mathbb{F}_{q}$. For each $q$, we choose a suitable $k$ to manufacture
enough good $g_{i}$'s for our purpose. Our choice of $k$ might be
not as small as possible, but makes the running time of $\mathtt{Rad8}(q,k)$
for the same result as short as possible.
\begin{itemize}
\item $\mathbf{Rad}_{8}(37,8)=\mathtt{Ideal}(1)$. So no PP of degree $8$
exists over $\mathbb{F}_{37}$.
\item $\mathbf{Rad}_{8}(43,7)=\mathtt{Ideal}(x_{6},x_{5},x_{4},x_{3,}x_{2},x_{1})$.
Then $f(x)=x^{8}$ is clearly not a PP over $\mathbb{F}_{43}$. So
no PP of degree $8$ exists over $\mathbb{F}_{43}$.
\item $\mathbf{Rad}_{8}(47,7)=\mathtt{Ideal}(x_{6},x_{5},x_{4},x_{3,}x_{2},x_{1})$.
So no PP of degree $8$ exists over $\mathbb{F}_{47}$.
\item $\mathbf{Rad}_{8}(53,7)=\mathtt{Ideal}(1)$. So no PP of degree $8$
exists over $\mathbb{F}_{53}$. 
\end{itemize}
The above calculations indicate that no PP of degree $8$ exists over
$\mathbb{F}_{q}$ of odd order $q$ if $31<q\leqslant53$.
\begin{itemize}
\item For any prime power $q\equiv7\ (\mathrm{mod}\ 8)$ with $53<q\leqslant919$,
\[
\mathbf{Rad}_{8}(q,7)=\mathtt{Ideal}(x_{6},x_{5},x_{3,}x_{2},x_{1}).
\]
Then $f(x)=x^{8}+a_{4}x^{4}$. Now $q=8t+7$ with $1\leqslant t\in\mathbb{Z}$.
For $m=\lfloor\frac{q}{4}\rfloor+1=2t+2<q-1$, by Hermite\textquoteright s
criterion, 
\begin{align*}
0 & =\mathbf{HC}_{8}(q,m)(0,0,0,a_{4},0,0)=\sum_{\substack{4j_{4}=16t+16-n\\
n\in\{w(8t+6):\ 1\leqslant w\leqslant\lfloor\frac{16t+16}{8t+6}\rfloor\}
}
}\binom{m}{j_{4},m-j_{4}}a_{4}^{j_{4}}\\
 & =\sum_{4j_{4}=16t+16-2(8t+6)}\binom{m}{j_{4},m-j_{4}}a_{4}^{j_{4}}=ma_{4}\in\mathbb{F}_{q}
\end{align*}
Note that $\gcd(m,q)=1$ as $q=4m-1$, thus $a_{4}=0$. Then $f(x)=x^{8}$
is clearly not a PP over $\mathbb{F}_{q}$. So no PP of degree $8$
exists over $\mathbb{F}_{q}$ if $q\equiv7\ (\mathrm{mod}\ 8)$ and
$53<q\leqslant919$.
\item For any prime $q\equiv3\ (\mathrm{mod}\ 8)$ with $53<q\leqslant919$,
\[
\mathbf{Rad}_{8}(q,7)=\mathtt{Ideal}(x_{5},x_{3},x_{1},x_{4}x_{6}-10x_{2},x_{6}^{3}-32x_{2}).
\]

\item For any prime $q\equiv5\ (\mathrm{mod}\ 8)$ with $53<q\leqslant919$,
\[
\mathbf{Rad}_{8}(q,7)=\mathtt{Ideal}(x_{5},x_{3},x_{1},x_{6}^{2}+\alpha(q)x_{4},x_{4}x_{6}-10x_{2}),
\]
where $\alpha(q)\in\mathbb{F}_{q}$ depends on $q$.
\item For any non-prime prime power $q\equiv3$ or $5\ (\mathrm{mod}\ 8)$
with $53<q\leqslant919$, i.e. $q=5^{3}$ or $3^{5}$, 
\begin{align*}
\mathbf{Rad}_{8}(5^{3},9) & =\mathtt{Ideal}(x_{5},x_{4},x_{3},x_{1},x_{6}^{3}-2x_{2}),\\
\mathbf{Rad}_{8}(3^{5},19) & =\mathtt{Ideal}(x_{5},x_{3},x_{1},x_{4}x_{6}-x_{2},x_{6}^{3}+x_{2},x_{2}x_{6}^{2}+x_{2}x_{4},x_{2}x_{4}^{2}+x_{2}^{2}x_{6}).
\end{align*}

\end{itemize}
For any prime power $q\equiv3$ or $5$ $(\mathrm{mod}\ 8)$ with
$53<q\leqslant919$, the above calculations imply that $a_{1}=a_{3}=a_{5}=0$
by Hermite\textquoteright s criterion. Moreover, $\mathbf{HC}_{8}(q,m)(0,x_{2},0,x_{4},0,x_{6})$
can be calculated by the SageMath function $\mathtt{HC8new}(q,m)$
in Algorithm \ref{alg:HC8new}.

\begin{algorithm}[h]
\protect\caption{\label{alg:HC8new}To calculate $\mathbf{HC}_{8}(q,m)(0,x_{2},0,x_{4},0,x_{6})$}

\begin{lstlisting}
def HC8new(q,m):
    HC = 0; K.<x1,x2,x3,x4,x5,x6> = PolynomialRing(GF(q))
    for n in range(q-1,8*m+1,q-1):
        u = 8*m-n
        for j2 in range(u//6+1):
            for j4 in range((u-6*j2)//4+1):
                v = u-6*j2-4*j4
                if is_odd(v): continue
                j6 = v//2; j = j2+j4+j6
                if j<=m:
                    HC += int(multinomial(j2,j4,j6,m-j))*x2^j2*x4^j4*x6^j6
    return HC
\end{lstlisting}

\end{algorithm}

Inspired by the case of $q\equiv7\ (\mathrm{mod}\ 8)$, we try to
run $\mathtt{HC8new}(q,m)$ with $m=\lfloor\frac{q}{4}\rfloor+1$.
After some experiments, we fortunately see that the output of the
following SageMath codes in Algorithm \ref{alg:non-ex}, which prints
$\mathtt{Ideal}(x_{6},x_{5},x_{4},x_{3,}x_{2},x_{1})$ for every prime
power $q\equiv3$ or $5$ $(\mathrm{mod}\ 8)$ with $53<q\leqslant919$,
verifies the non-existence of PPs of degree $8$ over $\mathbb{F}_{q}$
for these $q$.

\begin{algorithm}[h]
\protect\caption{\label{alg:non-ex}Non-existence of PPs of degree $8$ over $\mathbb{F}_{q}$
if $q\equiv3$ or $5$ $(\mathrm{mod}\ 8)$ with $53<q\leqslant919$}

\begin{lstlisting}
for q in prime_range(54,920)+[5^3,3^5]:
    if q%8 in [1,7]: continue
    K.<x1,x2,x3,x4,x5,x6> = PolynomialRing(GF(q))
    if q == 5^3: k = 9
    elif q== 3^5: k = 19
    else: k = 7
    R = Rad8(q,k).gens()
    if x1 in R and x3 in R and x5 in R:
        print(Ideal(R+[HC8new(q,q//4+1)]).radical())
    else: print("Fail when q = %d" % q)
\end{lstlisting}

\end{algorithm}

Calculations in this section for odd prime powers $q$ such that $31<q\leqslant919$,
together with Lemma \ref{lem:Carlitz-Wan} in which $C_{8}=919$ by
\citep{Fan2019Weil}, ensure the following Theorem \ref{thm:Non-ex}
of non-existence.
\begin{thm}
\label{thm:Non-ex}No PP of degree $8$ exists over any finite field
$\mathbb{F}_{q}$ of odd order $q>31$.
\end{thm}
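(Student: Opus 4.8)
The plan is to reduce the infinite range of odd prime powers $q>31$ to a finite, computationally tractable set, and then dispatch that set entirely by the radical computations and the associated Hermite's-criterion arguments already assembled in this section. The key external input is Lemma \ref{lem:Carlitz-Wan} together with the author's Weil-bound preprint \citep{Fan2019Weil}, which gives $C_{8}=919$. Since $8$ is even, any odd prime power $q$ with $\gcd(8,q-1)\neq1$---equivalently $q\equiv1\ (\mathrm{mod}\ 2)$, which holds for all odd $q$, so more precisely $q$ even would be needed for $\gcd$ to fail---forces attention to the condition $\gcd(8,q-1)=1$. For odd $q$ we have $q-1$ even, so $\gcd(8,q-1)\geqslant2$, and Lemma \ref{lem:Carlitz-Wan} then asserts directly that no PP of degree $8$ exists over $\mathbb{F}_{q}$ once $q>C_{8}=919$ and $\gcd(8,q)=1$ (automatic for odd $q$). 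This immediately disposes of every odd $q>919$.

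The remaining, finite obstacle is the band $31<q\leqslant919$. First I would note that $q\equiv1\ (\mathrm{mod}\ 8)$ is eliminated outright, since the excerpt already records $\mathbf{HC}_{8}(q,\frac{q-1}{8})=1$, contradicting the vanishing required by Hermite's criterion. So only $q\equiv3,5,7\ (\mathrm{mod}\ 8)$ survive, matching the parametrization $q=8t+s$ with $s\in\{3,5,7\}$. The strategy is then a case split by residue of $q$ modulo $8$, exactly as the displayed radical computations are organized. For the small window $31<q\leqslant53$ the four explicit evaluations $\mathbf{Rad}_{8}(37,8)$, $\mathbf{Rad}_{8}(43,7)$, $\mathbf{Rad}_{8}(47,7)$, $\mathbf{Rad}_{8}(53,7)$ directly force either the unit ideal (no solution at all) or $(x_{1},\dots,x_{6})$ (forcing $f=x^{8}$, manifestly not a PP), settling those $q$.

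For $53<q\leqslant919$ I would invoke the residue-class computations verbatim. When $q\equiv7\ (\mathrm{mod}\ 8)$, $\mathbf{Rad}_{8}(q,7)$ forces $a_{1}=a_{2}=a_{3}=a_{5}=a_{6}=0$, leaving $f=x^{8}+a_{4}x^{4}$; the explicit Hermite evaluation at $m=\lfloor q/4\rfloor+1=2t+2$ yields $ma_{4}=0$ with $\gcd(m,q)=1$ (since $q=4m-1$), so $a_{4}=0$ and $f=x^{8}$ fails. When $q\equiv3$ or $5\ (\mathrm{mod}\ 8)$, the radicals force $a_{1}=a_{3}=a_{5}=0$, after which one adjoins the single extra polynomial $\mathbf{HC}_{8}(q,\lfloor q/4\rfloor+1)(0,x_{2},0,x_{4},0,x_{6})$ computed by $\mathtt{HC8new}$; Algorithm \ref{alg:non-ex} checks that for every such prime power---including the two non-prime exceptions $5^{3}$ and $3^{5}$, handled with larger $k$---the augmented radical collapses to $(x_{1},\dots,x_{6})$, again forcing $f=x^{8}$.

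The genuine obstacle is not any single deduction but the \emph{uniformity} of the residue-class radical computations across an entire congruence class of several hundred prime powers: the displayed generators for $\mathbf{Rad}_{8}(q,7)$ (e.g. the $q\equiv5$ case with its $q$-dependent coefficient $\alpha(q)$) must have the claimed shape for \emph{every} $q$ in the band, not merely for sampled values. I would therefore treat the verification as a finite but exhaustive loop over all odd prime powers $q$ with $31<q\leqslant919$ and $q\not\equiv1\ (\mathrm{mod}\ 8)$, relying on SageMath's \textsc{Singular}-backed radical routine (Kemper's algorithm \citep{Kemper2002calculation} in positive characteristic) to certify each case; the correctness then rests on the soundness of that CAS implementation together with the elementary Hermite evaluations used to finish each residue class. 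Combining the $q>919$ bound from Lemma \ref{lem:Carlitz-Wan} with this finite verification for $31<q\leqslant919$ establishes Theorem \ref{thm:Non-ex}.
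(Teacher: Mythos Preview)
Your proposal is correct and follows essentially the same approach as the paper: reduce to the finite band $31<q\leqslant919$ via Lemma~\ref{lem:Carlitz-Wan} with $C_{8}=919$ from \citep{Fan2019Weil}, dispose of $q\equiv1\ (\mathrm{mod}\ 8)$ by Hermite's criterion, handle $q\in\{37,43,47,53\}$ by the four displayed radicals, and then treat $53<q\leqslant919$ by the residue-class radical computations (the $q\equiv7$ case finished by the explicit Hermite evaluation at $m=\lfloor q/4\rfloor+1$, the $q\equiv3,5$ cases by adjoining $\mathtt{HC8new}(q,\lfloor q/4\rfloor+1)$ as in Algorithm~\ref{alg:non-ex}). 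Your closing remark that the argument ultimately rests on an exhaustive CAS loop over all relevant $q$ in the band is exactly the paper's stance as well.
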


\section{\label{sec:Results}Explicit results}

This section lists all PPs of degree $8$ (in normalized form) over
$\mathbb{F}_{q}$ of odd order $q>8$. Noting that $q\not\equiv1\ (\mathrm{mod}\ 8)$
by Hermite\textquoteright s criterion, and that $q\leqslant31$ by
Theorem \ref{thm:Non-ex}, we indeed have 
\[
q\in\{11,13,19,23,27,29,31\}.
\]

To make the resulting list as short as possible, we first investigate
the linear transformation relations among polynomials of degree $8$.
Two polynomials $f$ and $g$ in $\mathbb{F}_{q}[x]$ are said to
be \emph{related by linear transformations} (\emph{linearly related}
for short) if there exist $s,t\in\mathbb{F}_{q}^{*}$ and $u,v\in\mathbb{F}_{q}$
such that $g(x)=sf(tx+u)+v$. Note that linearly related $f$ and
$g$ possess the same degree, and $f$ is a PP over $\mathbb{F}_{q}$
if and only if so is $g$.
\begin{prop}
\label{prop:Eq8} Let $q$ be an odd prime power. Then each polynomial
of degree $8$ in $\mathbb{F}_{q}[x]$ is linearly related to some
$f\in\mathbb{F}_{q}[x]$ in normalized form, namely $f(x)=x^{8}+\sum_{i=1}^{6}a_{i}x^{i}$
with all $a_{i}\in\mathbb{F}_{q}$. Moreover, for another $g(x)=x^{8}+\sum_{i=1}^{6}b_{i}x^{i}\in\mathbb{F}_{q}[x]$
with all $b_{i}\in\mathbb{F}_{q}$, we have that $f$ and $g$ are
linearly related if and only if $f(x)=t^{8}g(t^{-1}x)$ for some $t\in\mathbb{F}_{q}^{*}$,
i.e. 
\[
(a_{6},a_{5},a_{4},a_{3},a_{2},a_{1})=(t^{2}b_{6},t^{3}b_{5},t^{4}b_{4},t^{5}b_{3},t^{6}b_{2},t^{7}b_{1}).
\]
\end{prop}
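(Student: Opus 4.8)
The plan is to prove the two assertions separately: first the reduction to normalized form (an existence statement), then the characterization of when two normalized polynomials are linearly related. For the first part, I would start with an arbitrary degree-$8$ polynomial $h(x)=\sum_{i=0}^{8}c_{i}x^{i}\in\mathbb{F}_{q}[x]$ with $c_{8}\neq0$, and explicitly construct the linear transformation $g(x)=sh(tx+u)+v$ bringing it to normalized form. The leading coefficient of $sh(tx+u)+v$ is $sc_{8}t^{8}$, so I would choose $s$ and $t$ to make it $1$ (using $s,t\in\mathbb{F}_{q}^{*}$). Then the coefficient of $x^{7}$ in $h(tx+u)$ is $c_{8}\binom{8}{7}t^{7}u+c_{7}t^{7}=t^{7}(8c_{8}u+c_{7})$; since $8\neq0$ in $\mathbb{F}_{q}$ (as $q$ is odd) and $c_{8}\neq 0$, I can solve $u=-c_{7}/(8c_{8})$ to kill the degree-$7$ term. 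Finally $v$ is chosen to cancel the constant term. This produces $f$ in the desired normalized form $x^{8}+\sum_{i=1}^{6}a_{i}x^{i}$, proving the first assertion.

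For the second part, I would argue both directions. The ``if'' direction is immediate: if $f(x)=t^{8}g(t^{-1}x)$ then this is exactly $sg(t'x+u)+v$ with $s=t^{8}$, $t'=t^{-1}$, $u=0$, $v=0$, so $f$ and $g$ are linearly related, and comparing coefficients of $x^{i}$ gives $a_{i}=t^{8}(t^{-1})^{i}b_{i}=t^{8-i}b_{i}$, which is exactly the stated coordinate relation. For the ``only if'' direction, I would suppose $f(x)=sg(tx+u)+v$ with both $f,g$ already in normalized form and deduce that the transformation must be of the restricted shape. Matching leading coefficients forces $st^{8}=1$. The crux is to show $u=0$ and then $s=t^{8}$, $v=0$.

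The main obstacle, and the genuinely computational heart of the argument, is showing $u=0$: I would match the coefficient of $x^{7}$ on both sides. Since both $f$ and $g$ are normalized, both have vanishing $x^{7}$-coefficient. Expanding $sg(tx+u)$ via the binomial theorem, the coefficient of $x^{7}$ is $s\bigl(8t^{7}u\cdot 1 + t^{7}b_{7}\bigr)=8st^{7}u$ (using $b_{7}=0$ and that $g$'s leading term contributes $\binom{8}{7}t^7 u$); setting this to $0$ and using $8\neq0$, $s\neq0$, $t\neq0$ yields $u=0$. Once $u=0$, the relation collapses to $f(x)=sg(tx)+v$; comparing constant terms gives $v=s b_{0}=0$ (as $f,g$ have zero constant term), and comparing leading coefficients gives $s=t^{-8}$, so $f(x)=t^{-8}g(tx)$. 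Replacing $t$ by $t^{-1}$ recovers the stated form $f(x)=t^{8}g(t^{-1}x)$, and reading off the coefficients gives the displayed vector identity. The only subtlety to flag is the repeated essential use of $q$ odd (so that $8=2^{3}\neq 0$ in $\mathbb{F}_{q}$), which is what makes the degree-$7$ coefficient a nontrivial linear condition on $u$.
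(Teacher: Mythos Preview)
Your proposal is correct and follows essentially the same approach as the paper: normalize by translating to kill the $x^{7}$ term (using $8\neq 0$ in $\mathbb{F}_{q}$), scale to make the leading coefficient $1$, and subtract the constant; for the second part, compare the $x^{8}$ and $x^{7}$ coefficients to force $st^{8}=1$ and $u=0$, then deduce $v=0$ from the constant terms. The paper's proof is just a terser version of yours (it writes the relation as $g(x)=sf(tx+u)+v$ rather than $f(x)=sg(tx+u)+v$, and leaves the ``if'' direction implicit), but the argument is the same.
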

\begin{proof}
Each polynomial $h$ of degree $8$ in $\mathbb{F}_{q}[x]$ can be
written as $h(x)=\sum_{i=1}^{8}c_{i}x^{i}$, with all $c_{i}\in\mathbb{F}_{q}$
and $c_{8}\neq0$. Let $f(x)=c_{8}^{-1}h(x-8^{-1}c_{8}^{-1}c_{7})-c_{8}^{-1}h(-8^{-1}c_{8}^{-1}c_{7})$,
then $f$ is in normalized form and linearly related to $h$.

Suppose that $f(x)=x^{8}+\sum_{i=1}^{6}a_{i}x^{i}$ and $g(x)=x^{8}+\sum_{i=1}^{6}b_{i}x^{i}$
(with all $a_{i}$, $b_{i}\in\mathbb{F}_{q}$) are linearly related,
namely $g(x)=sf(tx+u)+v$ with $s,t\in\mathbb{F}_{q}^{*}$ and $u,v\in\mathbb{F}_{q}$.
Clearly, $st^{8}=1$ and $8st^{7}u=0$, considering the coefficients
of $x^{8}$ and $x^{7}$ respectively. So $s=t^{-8}$ and $u=0$.
Then $g(x)=t^{-8}f(tx)+v$, where $v=g(0)-t^{-8}f(0)=0$.
\end{proof}
Hereafter, we consider a polynomial in $\mathbb{F}_{q}[x]$ in normalized
form $f(x)=x^{8}+\sum_{i=1}^{6}a_{i}x^{i}$ with all $a_{i}\in\mathbb{F}_{q}$.
Algorithm \ref{alg:PP8} defines a SageMath function $\mathtt{isPP8}(q,a_{6},a_{5},a_{4},a_{3},a_{2},a_{1})$
to examine whether or not $f$ is a PP over $\mathbb{F}_{q}$. By
Wan \citep{Wan1993padic}, it suffices to test whether or not the
value set $\{f(c):c\in\mathbb{F}_{q}\}$ contains $\lfloor q-\frac{q-1}{8}\rfloor+1$
distinct values. In the following subsections, we will look for all
points $(a_{1},a_{2},\dots,a_{6})$ in $\mathbb{F}_{q}^{6}$ for $f$
to be a PP over $\mathbb{F}_{q}$, up to linear transformations as
indicated by Proposition \ref{prop:Eq8}, for any odd prime power
$q\not\equiv1\ (\mathrm{mod}\ 8)$ with $8<q\leqslant31$, through
a brute-force search optimized by a case-by-case analysis.

\begin{algorithm}[h]

\protect\caption{\label{alg:PP8}To examine whether $f(x)=x^{8}+\sum_{i=1}^{6}a_{i}x^{i}$
is a PP over $\mathbb{F}_{q}$}

\begin{lstlisting}
def isPP8(q,a6,a5,a4,a3,a2,a1):
    V = []
    for x in list(GF(q,'e'))[0:1+int(q-(q-1)/8)]:
        t = x^8+a6*x^6+a5*x^5+a4*x^4+a3*x^3+a2*x^2+a1*x
        if t in V: return False
        else: V.append(t)
    return True
\end{lstlisting}
\end{algorithm}

\subsection{Case $q=31$}

Run the following SageMath codes for polynomials in $\mathbf{Rad}_{8}(31,12)$
with at most two terms.

\begin{lstlisting}
for g in Rad8(31,12).gens():
    if g.number_of_terms()<3: print g
\end{lstlisting}

The output prints $x_{6}$, $x_{2}^{5}-1$, $x_{3}^{6}-1$, $x_{4}^{15}-1$
and $x_{1}^{30}-1$, all of which vanish at $(a_{1},a_{2},\dots,a_{6})\in\mathbb{F}_{31}^{6}$.
So $a_{6}=0\neq a_{1}$, and $a_{2}^{5}=a_{3}^{6}=a_{4}^{15}=1$.

Note that $f(x)$ is linearly related to 
\[
t^{8}f(t^{-1}x)=x^{8}+t^{2}a_{6}x^{6}+t^{3}a_{5}x^{5}+t^{4}a_{4}x^{4}+t^{5}a_{3}x^{3}+t^{6}a_{2}x^{2}+t^{7}a_{1}x.
\]
As $q-1=30$ is coprime to $7$, $a_{1}=a^{7}$ for some $a\in\mathbb{F}_{31}^{*}$.
Replacing $f(x)$ by $a^{-8}f(ax)$ if necessary, we assume $a_{1}=1$
without loss of generality. So Algorithm \ref{alg:31} lists PPs of
degree $8$ in normalized form over $\mathbb{F}_{31}$, up to linear
transformations.

\begin{algorithm}[h]
\protect\caption{\label{alg:31} To list PPs of degree $8$ in normalized form over
$\mathbb{F}_{31}$}

\begin{lstlisting}
q = 31; F = GF(q)
for a5 in F:
    for a4 in [a4 for a4 in F if a4^15==1]:
        for a3 in [a3 for a3 in F if a3^6==1]:
            for a2 in [a2 for a2 in F if a2^5==1]:
                if isPP8(q,0,a5,a4,a3,a2,1): print(0,a5,a4,a3,a2,1)
\end{lstlisting}

\end{algorithm}

The output of Algorithm \ref{alg:31} is $(0,19,25,6,2,1)$, which
gives the following Proposition \ref{prop:31}.
\begin{prop}
\label{prop:31}All PPs of degree $8$ in normalized form over $\mathbb{F}_{31}$
are exactly 
\[
x^{8}+19t^{3}x^{5}+25t^{4}x^{4}+6t^{5}x^{3}+2t^{6}x^{2}+t^{7}x,
\]
with $t$ running through $\mathbb{F}_{31}^{*}$.
\end{prop}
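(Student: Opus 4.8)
The plan is to combine the radical constraints already extracted from $\mathbf{Rad}_8(31,12)$ with the linear-transformation normalization of Proposition~\ref{prop:Eq8}, and then close the argument by an exhaustive but small search. First I would record the necessary conditions on the coefficients. Since any normalized PP $f(x)=x^8+\sum_{i=1}^6 a_i x^i$ over $\mathbb{F}_{31}$ has its coefficient tuple $(a_1,\dots,a_6)$ vanishing on every polynomial in $\mathbf{Rad}_8(31,12)$ (by Hermite's criterion together with the definition of the radical), the two-term generators $x_6$, $x_2^5-1$, $x_3^6-1$, $x_4^{15}-1$, $x_1^{30}-1$ force
\[
a_6=0,\qquad a_2^5=a_3^6=a_4^{15}=1,\qquad a_1^{30}=1.
\]
In particular $a_1\neq 0$, so no normalized degree-$8$ PP over $\mathbb{F}_{31}$ can have vanishing linear coefficient.

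Next I would exploit the linear-transformation relation to fix $a_1=1$. By Proposition~\ref{prop:Eq8}, $f$ is linearly related to $t^8 f(t^{-1}x)$, whose linear coefficient is $t^7 a_1$. Because $\gcd(7,30)=1$, the map $a\mapsto a^7$ permutes $\mathbb{F}_{31}^{*}$, so one may pick $a$ with $a_1=a^7$ and replace $f$ by $a^{-8}f(ax)$ to assume $a_1=1$ without loss of generality. This turns the a priori search over $\mathbb{F}_{31}^6$ into a search over the finite set determined by $a_6=0$, $a_1=1$, $a_5\in\mathbb{F}_{31}$, $a_4^{15}=1$, $a_3^6=1$, $a_2^5=1$, of size $31\cdot 15\cdot 6\cdot 5=13950$.

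Then I would run the permutation test of Algorithm~\ref{alg:PP8}, justified by Wan's value-set criterion (which only requires checking that $\lfloor q-\frac{q-1}{8}\rfloor+1$ distinct values are attained), over this list, exactly as in Algorithm~\ref{alg:31}. The search returns the single representative $(a_6,a_5,a_4,a_3,a_2,a_1)=(0,19,25,6,2,1)$. Finally, undoing the normalization via Proposition~\ref{prop:Eq8} recovers every normalized PP in this linear class as $t^8 f(t^{-1}x)$, yielding precisely the stated family parametrized by $t\in\mathbb{F}_{31}^{*}$; since $a_1=t^7$ then runs bijectively over $\mathbb{F}_{31}^{*}$, these are $30$ distinct polynomials, and the radical constraints guarantee there are no others.

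The main obstacle is conceptual rather than computational: the entire argument hinges on the exhaustiveness of the radical constraints, that is, on trusting that $\mathbf{Rad}_8(31,12)$ (a \textsc{Singular} computation) genuinely contains the listed two-term generators, since it is these that collapse the search space from roughly $31^6\approx 9\times 10^8$ to under $1.4\times 10^4$ candidates. Once those constraints are granted, the coprimality reduction $\gcd(7,30)=1$ and the finite brute-force verification are entirely routine.
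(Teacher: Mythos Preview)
Your proposal is correct and follows essentially the same route as the paper: extract the two-term generators of $\mathbf{Rad}_8(31,12)$ to constrain $(a_1,\dots,a_6)$, use $\gcd(7,30)=1$ to normalize $a_1=1$ via Proposition~\ref{prop:Eq8}, run the restricted brute-force search of Algorithm~\ref{alg:31}, and then undo the normalization. The only additions you make---the explicit count $31\cdot15\cdot6\cdot5$ of candidates and the remark that the $30$ resulting polynomials are pairwise distinct because $t\mapsto t^7$ is a bijection---are minor elaborations of the paper's argument.
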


\subsection{Case $q=29$}

Run the following SageMath codes for polynomials in $\mathbf{Rad}_{8}(29,7)$
with at most three terms.

\begin{lstlisting}
for g in Rad8(29,7).gens():
    if g.number_of_terms()<4: print g
\end{lstlisting}

The output prints $x_{6}^{2}-9x_{4}$, $x_{1}^{8}+4x_{1}^{4}-5$,
$x_{2}^{15}-x_{2}$ and $x_{3}^{29}-x_{3}$, all of which vanish at
$(a_{1},a_{2},\dots,a_{6})\in\mathbb{F}_{29}$. So $a_{4}=a_{6}^{2}/9$,
$a_{1}^{8}+4a_{1}^{4}=5$ and $a_{2}^{15}=a_{2}$.

As $q-1=28$ is coprime to $3$, without loss of generality we can
assume $a_{5}\in\{0,1\}$ (by linear transformations if necessary).
So Algorithm \ref{alg:29} lists PPs of degree $8$ in normalized
form over $\mathbb{F}_{29}$, up to linear transformations.

\begin{algorithm}[h]
\protect\caption{\label{alg:29} To list PPs of degree $8$ in normalized form over
$\mathbb{F}_{29}$}

\begin{lstlisting}
q = 29; F = GF(q)
for a6 in F:
    a4 = a6^2/F(9)
    for a5 in [0,1]:
        for a3 in F:
            for a2 in [a2 for a2 in F if a2^15==a2]:
                for a1 in [a1 for a1 in F if a1^8+4*a1^4==5]:
                    if isPP8(q,a6,a5,a4,a3,a2,a1):
                        print(a6,a5,a4,a3,a2,a1)
\end{lstlisting}
\end{algorithm}

The output of Algorithm \ref{alg:29} prints: 
\[
(0,0,0,0,0,4),\ (0,0,0,0,0,10),\ (0,0,0,0,0,19),\ (0,0,0,0,0,25),\ (26,1,1,4,20,1).
\]
Note that $\{4t^{7}:t\in\mathbb{F}_{29}^{*}\}=\{4,10,19,25\}$. So
we get the following Proposition \ref{prop:29}.
\begin{prop}
\label{prop:29}All PPs of degree $8$ in normalized form over $\mathbb{F}_{29}$
are exactly 
\begin{gather*}
x^{8}+4t^{7}x,\qquad x^{8}+26t^{2}x^{6}+t^{3}x^{5}+t^{4}x^{4}+4t^{5}x^{3}+20t^{6}x^{2}+t^{7}x,
\end{gather*}
with $t$ running through $\mathbb{F}_{29}^{*}$.
\end{prop}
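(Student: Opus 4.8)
The plan is to determine all normalized PPs $f(x)=x^{8}+\sum_{i=1}^{6}a_{i}x^{i}$ over $\mathbb{F}_{29}$ by combining the coefficient constraints coming from Hermite's criterion (Lemma \ref{lem:HC}, in the packaged form of the radical $\mathbf{Rad}_{8}(29,7)$) with the linear-transformation reduction of Proposition \ref{prop:Eq8}, and then finishing with the exhaustive value-set test of Algorithm \ref{alg:PP8}. Since $29\equiv5\ (\mathrm{mod}\ 8)$, such PPs can exist, and by Proposition \ref{prop:Eq8} it suffices to enumerate admissible tuples $(a_{1},\dots,a_{6})$ up to the scaling action $(a_{6},a_{5},a_{4},a_{3},a_{2},a_{1})\mapsto(t^{2}a_{6},t^{3}a_{5},t^{4}a_{4},t^{5}a_{3},t^{6}a_{2},t^{7}a_{1})$ for $t\in\mathbb{F}_{29}^{*}$.

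First I would compute $\mathbf{Rad}_{8}(29,7)$ via Algorithm \ref{alg:Rad8} and read off its short generators. Every $(a_{1},\dots,a_{6})$ arising from a PP must vanish on this ideal, so the generators $x_{6}^{2}-9x_{4}$, $x_{1}^{8}+4x_{1}^{4}-5$, $x_{2}^{15}-x_{2}$ and $x_{3}^{29}-x_{3}$ translate into $a_{4}=a_{6}^{2}/9$, $a_{1}^{8}+4a_{1}^{4}=5$ (whence $a_{1}\neq0$) and $a_{2}^{15}=a_{2}$, while $a_{3}^{29}=a_{3}$ is vacuous in $\mathbb{F}_{29}$. These cut the candidate space down sharply: $a_{4}$ becomes a function of $a_{6}$, $a_{1}$ ranges over a set of at most eight roots, and $a_{2}$ ranges over the $15$-element set $\{0\}\cup\{a_{2}:a_{2}^{14}=1\}$, leaving only $a_{3}$ and $a_{6}$ free.

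Next I would use the scaling action to normalize $a_{5}$. Because $\gcd(3,q-1)=\gcd(3,28)=1$, the map $t\mapsto t^{3}$ permutes $\mathbb{F}_{29}^{*}$, so any nonzero $a_{5}$ can be scaled to $1$; hence we may assume $a_{5}\in\{0,1\}$ without loss of generality. This is precisely the loop structure of Algorithm \ref{alg:29}, whose brute-force search calls $\mathtt{isPP8}$ on every surviving tuple and returns the five outputs $(0,0,0,0,0,4)$, $(0,0,0,0,0,10)$, $(0,0,0,0,0,19)$, $(0,0,0,0,0,25)$ and $(26,1,1,4,20,1)$. The final step is to collapse these into linear-equivalence classes. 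In the branch $a_{5}=0$ the scaling freedom is untouched, so the first four tuples form a single orbit: since $\gcd(7,28)=7$, the image $\{t^{7}:t\in\mathbb{F}_{29}^{*}\}$ is the order-$4$ subgroup and $\{4t^{7}\}=\{4,10,19,25\}$, giving the family $x^{8}+4t^{7}x$. In the branch $a_{5}=1$ the stabilizer is $\{t:t^{3}=1\}=\{1\}$, so $(26,1,1,4,20,1)$ is a class by itself, yielding the second family.

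The computational steps are mechanical, so the only real care needed is in the bookkeeping of the group action. The main obstacle is to be sure the enumeration is genuinely exhaustive and that the two reported families are the correct orbit representatives: one must verify both that no PP is lost when restricting $a_{5}$ to $\{0,1\}$ (which is exactly where the coprimality $\gcd(3,28)=1$ is essential) and that the residual stabilizer differs between the $a_{5}=0$ and $a_{5}=1$ branches, so that the four short outputs genuinely merge while the long output does not. Correctness of the radical computation itself rests on Kemper's algorithm as implemented in \textsc{Singular}, which I take as given.
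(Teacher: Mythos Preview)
Your proposal is correct and follows essentially the same route as the paper: extract the short generators of $\mathbf{Rad}_{8}(29,7)$ to constrain $(a_{1},\dots,a_{6})$, normalize $a_{5}\in\{0,1\}$ via $\gcd(3,28)=1$, run the brute-force search of Algorithm \ref{alg:29}, and then collapse the five outputs into two orbits using $\{4t^{7}:t\in\mathbb{F}_{29}^{*}\}=\{4,10,19,25\}$. Your explicit stabilizer check in the $a_{5}=1$ branch is a small addition beyond what the paper spells out, but the argument is otherwise identical.
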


\subsection{Case $q=27$}

Note that $(a_{1},a_{3})\neq(0,0)$ by the output of the following
SageMath codes.

\begin{lstlisting}
K.<x1,x2,x3,x4,x5,x6> = PolynomialRing(GF(27))
Ideal([HC8(27,4+i) for i in range(10)]+[x1,x3]).radical()
\end{lstlisting}
The output is $\mathtt{Ideal}(1)$, which indicates that polynomials
$x_{1}$ and $x_{3}$ cannot both vanish at $(a_{1},a_{2},\dots,a_{6})\in\mathbb{F}_{27}$.
So we can make the following assumptions on $(a_{1},a_{2},\dots,a_{6})$,
by linear transformations if necessary.
\begin{itemize}
\item Assume $a_{1}\in\{0,1\}$ as $q-1=26$ is coprime to $7$.
\item When $a_{1}=0$ (and thus $a_{3}\neq0$), assume $a_{3}=1$, as $q-1=26$
is coprime to $5$.
\end{itemize}
Note that $a_{2}=-a_{6}^{3}$ since $\mathbf{HC}_{8}(27,4)=x_{6}^{3}+x_{2}$.
So Algorithm \ref{alg:27} lists PPs of degree $8$ in normalized
form over $\mathbb{F}_{27}$ up to linear transformations, and Proposition
\ref{prop:27} is read off from its output.

\begin{algorithm}[h]
\protect\caption{\label{alg:27} To list PPs of degree $8$ in normalized form over
$\mathbb{F}_{27}$}

\begin{lstlisting}
q = 3^3; F = GF(q,'e')
for (a3,a1) in [(1,0)]+[(a3,1) for a3 in F]:
    for a5 in F:
        for a6 in F:
            a2 = -a6^3
            for a4 in F:
                if isPP8(q,a6,a5,a4,a3,a2,a1): print(a6,a5,a4,a3,a2,a1)
\end{lstlisting}
\end{algorithm}

\begin{prop}
\label{prop:27}All PPs of degree $8$ in normalized form over $\mathbb{F}_{27}$
are exactly those of the form $x^{8}+\sum_{i=1}^{6}t^{8-i}a_{i}x^{i}$,
with $t\in\mathbb{F}_{27}^{*}$ and $(a_{6},a_{5},a_{4},a_{3},a_{2},a_{1})$
listed as follows: 
\begin{alignat*}{3}
 & (1,0,2,1,2,0), & \quad & (1,1,2,1,2,0), & \quad\\
 & (e^{2},2e,2e^{3},e^{10},2e^{6},1), &  & (e^{6},2e^{3},2e^{9},e^{4},e^{5},1), &  & (2e^{5},2e^{9},2e,e^{12},2e^{2},1),\\
 & (e^{2},2e^{4},e^{7},e^{6},2e^{6},1), &  & (e^{6},2e^{12},2e^{8},2e^{5},e^{5},1), &  & (2e^{5},2e^{10},e^{11},e^{2},2e^{2},1),\\
 & (e^{4},2e^{2},e^{7},e^{4},2e^{12},1), &  & (e^{12},2e^{6},2e^{8},e^{12},2e^{10},1), &  & (e^{10},e^{5},e^{11},e^{10},2e^{4},1),\\
 & (2e^{4},e^{10},2e^{12},2e^{11},e^{12},1), &  & (2e^{12},e^{4},2e^{10},2e^{7},e^{10},1), &  & (2e^{10},e^{12},2e^{4},e^{8},e^{4},1).
\end{alignat*}
 
\end{prop}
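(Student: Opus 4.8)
The plan is to classify the PPs of degree $8$ over $\mathbb{F}_{27}$ by combining the necessary conditions furnished by Hermite's criterion (Lemma \ref{lem:HC}) with the linear-transformation reduction of Proposition \ref{prop:Eq8}, and then to settle the finitely many surviving candidates by the exhaustive test $\mathtt{isPP8}$ of Algorithm \ref{alg:PP8}. First I would note that we seek, up to linear equivalence, all tuples $(a_{1},\dots,a_{6})\in\mathbb{F}_{27}^{6}$ for which $f(x)=x^{8}+\sum_{i=1}^{6}a_{i}x^{i}$ permutes $\mathbb{F}_{27}$. By Proposition \ref{prop:Eq8}, two normalized octics are linearly related precisely when their coefficient vectors differ by the scaling $(a_{6},a_{5},a_{4},a_{3},a_{2},a_{1})\mapsto(t^{2}a_{6},t^{3}a_{5},t^{4}a_{4},t^{5}a_{3},t^{6}a_{2},t^{7}a_{1})$ for some $t\in\mathbb{F}_{27}^{*}$, so it suffices to enumerate one representative per orbit and then expand each over all $t$.

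Next I would extract the pruning relations from the polynomials $\mathbf{HC}_{8}(27,m)$. Since $\lfloor 27/8\rfloor=3$, the first nontrivial equation is $\mathbf{HC}_{8}(27,4)=x_{6}^{3}+x_{2}$, which forces $a_{2}=-a_{6}^{3}$ and thereby removes $a_{2}$ as an independent parameter. Adjoining $x_{1},x_{3}$ to the ideal generated by $\mathbf{HC}_{8}(27,4+i)$ for $0\leqslant i\leqslant 9$ and taking the radical returns the unit ideal, so $x_{1}$ and $x_{3}$ cannot vanish simultaneously at the coefficient vector of any PP; hence $(a_{1},a_{3})\neq(0,0)$. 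Both of these are genuine necessary conditions, so restricting the search to them discards no PP.

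I would then pin down orbit representatives via the scaling action. Because $\gcd(7,26)=1$, the map $t\mapsto t^{7}$ is a bijection of $\mathbb{F}_{27}^{*}$, so any orbit with $a_{1}\neq0$ has a unique representative with $a_{1}=1$. In the complementary case $a_{1}=0$ we have $a_{3}\neq0$, and since $\gcd(5,26)=1$ the map $t\mapsto t^{5}$ is likewise a bijection, yielding a unique representative with $a_{1}=0$, $a_{3}=1$. Thus the orbits are in bijection with the parameter set $\{(a_{3},a_{1})=(1,0)\}\cup\{(a_{3},1):a_{3}\in\mathbb{F}_{27}\}$ together with free $a_{4},a_{5},a_{6}$ and $a_{2}=-a_{6}^{3}$, which is exactly the range swept by Algorithm \ref{alg:27}. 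Running $\mathtt{isPP8}$ over this finite set and collecting the outputs produces the fourteen coefficient vectors of the statement; expanding each over $t\in\mathbb{F}_{27}^{*}$ through the scaling recovers every PP in its orbit.

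The point I would guard most carefully is correctness of the reduction rather than the computation: I must ensure no PP is lost and no orbit is listed twice. Completeness follows because every PP satisfies the necessary relations $a_{2}=-a_{6}^{3}$ and $(a_{1},a_{3})\neq(0,0)$, so its normalized representative lands in the searched parameter set, where $\mathtt{isPP8}$ — a complete permutation test via Wan's bound — detects it. Non-redundancy follows from the uniqueness of the representative established in each case above, so distinct orbits give distinct tuples and each is found exactly once. The residual work, namely confirming $\mathbf{HC}_{8}(27,4)=x_{6}^{3}+x_{2}$ from the multinomial formula, validating the radical output, and executing the finite search, is routine and delegated to SageMath.
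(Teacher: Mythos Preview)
Your proposal is correct and follows essentially the same approach as the paper: use $\mathbf{HC}_{8}(27,4)=x_{6}^{3}+x_{2}$ to eliminate $a_{2}$, use the radical computation with $x_{1},x_{3}$ adjoined to force $(a_{1},a_{3})\neq(0,0)$, normalize via $\gcd(7,26)=\gcd(5,26)=1$ to the parameter set of Algorithm~\ref{alg:27}, and read off the fourteen tuples from its output. Your explicit verification of non-redundancy (that the stabilizer of each chosen representative under the $t$-scaling is trivial) is a nice addition that the paper leaves implicit.
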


\subsection{Case $q=23$}

Run the following SageMath codes for polynomials in $\mathbf{Rad}_{8}(23,9)$
with at most four terms.

\begin{lstlisting}
for g in Rad8(23,9).gens():
    if g.number_of_terms()<5: print g
\end{lstlisting}

The output prints $x_{6}$, $x_{4}x_{5}^{2}-11x_{3}^{2}+x_{2}x_{4}+x_{1}x_{5}$,
$x_{5}^{22}-1$, $x_{4}^{22}-1$, $x_{3}^{22}-1$ and $x_{1}^{22}-1$,
all of which vanish at $(a_{1},a_{2},\dots,a_{6})\in\mathbb{F}_{23}$.
So $a_{6}=0\neq a_{j}$ for $j\in\{1,3,4,5\}$, and $a_{4}a_{5}^{2}-11a_{3}^{2}+a_{2}a_{4}+a_{1}a_{5}=0$.

As $q-1=22$ is coprime to $3$, without loss of generality we can
assume $a_{5}=1$ (by linear transformations if necessary). So Algorithm
\ref{alg:23} lists PPs of degree $8$ in normalized form over $\mathbb{F}_{23}$
up to linear transformations, and Proposition \ref{prop:23} is read
off from its output.

\begin{algorithm}[h]
\protect\caption{\label{alg:23} To list PPs of degree $8$ in normalized form over
$\mathbb{F}_{23}$}

\begin{lstlisting}
q = 23; F = GF(q)
for a4 in [a4 for a4 in F if a4!=0]:
    for a3 in [a3 for a3 in F if a3!=0]:
        for a2 in F:
            a1 = -a4+11*a3^2-a2*a4
            if isPP8(q,0,1,a4,a3,a2,a1): print(0,1,a4,a3,a2,a1)
\end{lstlisting}
\end{algorithm}

\begin{prop}
\label{prop:23}All PPs of degree $8$ in normalized form over $\mathbb{F}_{23}$
are exactly those of the form $x^{8}+t^{3}x^{5}+t^{4}a_{4}x^{4}+t^{5}a_{3}x^{3}+t^{6}a_{2}x^{2}+t^{7}a_{1}x$,
with $t\in\mathbb{F}_{23}^{*}$ and $(a_{4},a_{3},a_{2},a_{1})$ listed
as follows: 
\begin{alignat*}{4}
 & (11,1,12,6), & \quad & (11,1,21,22), & \quad & (15,8,16,12), & \quad & (15,16,13,7),\\
 & (16,8,7,1), &  & (19,5,10,20), &  & (20,12,2,6).
\end{alignat*}

\end{prop}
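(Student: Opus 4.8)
The plan is to specialize to $q=23$ the computational template already used above for $q=31,29,27$. By Hermite's criterion (Lemma \ref{lem:HC}), the coefficient vector $(a_1,\dots,a_6)$ of any normalized PP $f(x)=x^8+\sum_{i=1}^6a_ix^i$ over $\mathbb{F}_{23}$ is a common zero of all $\mathbf{HC}_8(23,m)$ with $1\leqslant m\leqslant21$, and hence of every element of the ideal $\mathbf{Rad}_8(23,9)$ produced by Algorithm \ref{alg:Rad8}. First I would isolate the few-term generators printed above, namely $x_6$, the relation $x_4x_5^2-11x_3^2+x_2x_4+x_1x_5$, and $x_j^{22}-1$ for $j\in\{1,3,4,5\}$, each of which must vanish at $(a_1,\dots,a_6)$.

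The second step is to turn these generators into constraints on the $a_i$. From $x_6$ we obtain $a_6=0$; from $x_j^{22}-1=0$ together with $c^{22}=1\iff c\neq0$ in $\mathbb{F}_{23}$ we obtain $a_j\neq0$ for $j\in\{1,3,4,5\}$; and the remaining generator forces $a_4a_5^2-11a_3^2+a_2a_4+a_1a_5=0$. Since $\gcd(3,22)=1$, the cubing map is a bijection of $\mathbb{F}_{23}^*$, so by Proposition \ref{prop:Eq8} every such $f$ (which has $a_5\neq0$) is linearly related to one with $a_5=1$. I would therefore normalize $a_5=1$ and recover the full orbits at the end through the scaling $(a_6,a_5,a_4,a_3,a_2,a_1)\mapsto(t^2a_6,t^3a_5,t^4a_4,t^5a_3,t^6a_2,t^7a_1)$.

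With $a_5=1$ the relation becomes linear in $a_1$, giving $a_1=-a_4+11a_3^2-a_2a_4$, which is exactly the assignment in Algorithm \ref{alg:23}. The third step is then a finite search: loop over the nonzero $a_4,a_3$ and over all $a_2\in\mathbb{F}_{23}$, compute $a_1$ from the relation, and test each tuple $(0,1,a_4,a_3,a_2,a_1)$ with $\mathtt{isPP8}$ of Algorithm \ref{alg:PP8}. The use of $\mathtt{isPP8}$ is legitimated by Wan \citep{Wan1993padic}: a degree-$8$ map on $\mathbb{F}_{23}$ permutes $\mathbb{F}_{23}$ exactly when its value set has at least $\lfloor q-\frac{q-1}{8}\rfloor+1$ distinct values, so checking the images of that many inputs for distinctness suffices. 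Reading off the survivors yields the seven quadruples $(a_4,a_3,a_2,a_1)$ in the statement, and reattaching the $t$-scaling (with $a_5=1\mapsto t^3$ and $a_6=0$) produces the one-parameter families displayed.

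The load-bearing step, and the only one not verifiable by hand, is the radical computation $\mathbf{Rad}_8(23,9)$: the entire reduction depends on the claim that these specific few-term polynomials genuinely lie in the radical, an output of \textsc{Singular}'s implementation of Kemper's algorithm \citep{Kemper2002calculation,Singular}. Granting it, the remaining argument is exhaustion over fewer than $22\cdot22\cdot23$ candidates, so completeness is clear. The only other point requiring care is that normalizing $a_5=1$ discards no PP up to linear equivalence, which is immediate from $\gcd(3,22)=1$ via Proposition \ref{prop:Eq8}, and that the printed list gives precisely one representative per scaling orbit.
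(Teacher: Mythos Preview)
Your proposal is correct and follows essentially the same approach as the paper: extract the few-term generators of $\mathbf{Rad}_8(23,9)$ to force $a_6=0$, $a_1,a_3,a_4,a_5\neq0$, and the quadratic relation; normalize $a_5=1$ via $\gcd(3,22)=1$; solve the relation for $a_1$; and brute-force the remaining $(a_4,a_3,a_2)$ with $\mathtt{isPP8}$, exactly as in Algorithm~\ref{alg:23}. Your closing remark that the normalization $a_5=1$ yields exactly one representative per orbit (since $t^3=1$ forces $t=1$ in $\mathbb{F}_{23}^*$) is the one verification the paper leaves implicit.
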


\subsection{Case $q=19$}

Note that $(a_{1},a_{3})\neq(0,0)$ by the output of the following
SageMath codes.

\begin{lstlisting}
K.<x1,x2,x3,x4,x5,x6> = PolynomialRing(GF(19))
Ideal([HC8(19,3+i) for i in range(7)]+[x3,x1]).radical()
\end{lstlisting}
The output is $\mathtt{Ideal}(1)$, which indicates that polynomials
$x_{1}$ and $x_{3}$ cannot both vanish at $(a_{1},a_{2},\dots,a_{6})\in\mathbb{F}_{19}$.
So we can make the following assumptions on $(a_{1},a_{2},\dots,a_{6})$,
by linear transformations if necessary.
\begin{itemize}
\item Assume $a_{1}\in\{0,1\}$ as $q-1=18$ is coprime to $7$.
\item When $a_{1}=0$ (and thus $a_{3}\neq0$), assume $a_{3}=1$, as $q-1=18$
is coprime to $5$.
\end{itemize}
Note that $a_{2}=-a_{6}^{3}/3-a_{5}^{2}-2a_{4}a_{6}$ since 
\[
\mathbf{HC}_{8}(19,3)=x_{6}^{3}+3x_{5}^{2}+6x_{4}x_{6}+3x_{2}.
\]
So Algorithm \ref{alg:19} lists PPs of degree $8$ in normalized
form over $\mathbb{F}_{19}$ up to linear transformations. There are
exactly $48$ linearly related classes of PPs of degree $8$ over
$\mathbb{F}_{19}$ as listed in Proposition \ref{prop:19}, corresponding
to $48$ outputting tuples of Algorithm \ref{alg:19}.

\begin{algorithm}[h]
\protect\caption{\label{alg:19} To list PPs of degree $8$ in normalized form over
$\mathbb{F}_{19}$}

\begin{lstlisting}
q = 19; F = GF(q)
for (a3,a1) in [(1,0)]+[(a3,1) for a3 in F]:
    for a6 in F:
        for a5 in F:
            for a4 in F:
                a2 = -a6^3/F(3)-a5^2-2*a4*a6
                if isPP8(q,a6,a5,a4,a3,a2,a1): print(a6,a5,a4,a3,a2,a1)
\end{lstlisting}
\end{algorithm}

\begin{prop}
\label{prop:19}All PPs of degree $8$ in normalized form over $\mathbb{F}_{19}$
are exactly those of the form $x^{8}+\sum_{i=1}^{6}t^{8-i}a_{i}x^{i}$,
with $t\in\mathbb{F}_{19}^{*}$ and $(a_{6},a_{5},a_{4},a_{3},a_{2},a_{1})$
listed as follows: 
\begin{alignat*}{4}
 & (0,1,3,1,18,0), & \quad & (8,1,7,1,14,0), & \quad & (15,1,1,1,3,0), & \quad & (9,9,18,0,17,1),\\
 & (17,16,2,0,8,1), &  & (4,7,17,1,9,1), &  & (8,16,10,1,15,1), &  & (15,4,9,1,14,1),\\
 & (1,8,1,2,16,1), &  & (3,6,3,2,13,1), &  & (6,18,2,2,17,1), &  & (12,11,10,3,13,1),\\
 & (2,5,8,4,10,1), &  & (10,10,11,4,18,1), &  & (0,4,4,5,3,1), &  & (0,9,9,5,14,1),\\
 & (10,0,0,5,15,1), &  & (14,5,15,5,2,1), &  & (0,14,12,6,13,1), &  & (13,16,2,6,11,1),\\
 & (16,7,1,6,4,1), &  & (0,6,12,7,2,1), &  & (3,7,16,7,17,1), &  & (6,17,3,10,2,1),\\
 & (17,12,6,10,3,1), &  & (11,2,8,11,16,1), &  & (11,9,1,11,17,1), &  & (18,0,18,11,11,1),\\
 & (9,15,12,12,0,1), &  & (12,6,8,13,13,1), &  & (13,17,8,13,12,1), &  & (10,15,14,14,4,1),\\
 & (12,12,4,14,1,1), &  & (17,3,9,14,17,1), &  & (18,12,4,14,10,1), &  & (18,17,13,14,16,1),\\
 & (0,5,4,15,13,1), &  & (8,13,8,15,1,1), &  & (2,11,6,16,17,1), &  & (3,17,17,16,18,1),\\
 & (9,18,16,16,0,1), &  & (11,8,9,16,10,1), &  & (1,8,12,17,13,1), &  & (3,8,12,17,7,1),\\
 & (3,16,1,17,14,1), &  & (8,3,18,18,1,1), &  & (11,3,5,18,1,1), &  & (18,11,0,18,6,1).
\end{alignat*}

\end{prop}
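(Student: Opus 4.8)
The plan is to collapse the nominally six-dimensional search over $(a_1,\dots,a_6)\in\mathbb{F}_{19}^6$ into a brute-force enumeration small enough for Algorithm~\ref{alg:19}, by combining three ingredients: the scaling equivalence of Proposition~\ref{prop:Eq8}, the ideal-theoretic exclusion already recorded above, and one Hermite relation that removes $a_2$.

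First I would invoke the radical computation displayed before the statement: the ideal generated by $\mathbf{HC}_8(19,3+i)$ for $0\leqslant i\leqslant6$ together with $x_1$ and $x_3$ has radical $\mathtt{Ideal}(1)$, so by Hermite's criterion no PP can satisfy $a_1=a_3=0$. Next I would use the diagonal action $f(x)\mapsto t^8f(t^{-1}x)$, which by Proposition~\ref{prop:Eq8} sends $(a_6,\dots,a_1)$ to $(t^2a_6,t^3a_5,t^4a_4,t^5a_3,t^6a_2,t^7a_1)$. Since $\gcd(7,18)=1$, the rule $a_1\mapsto t^7a_1$ is transitive on $\mathbb{F}_{19}^*$, so up to linear transformations I may assume $a_1\in\{0,1\}$; and in the branch $a_1=0$ (forcing $a_3\neq0$), the rule $a_3\mapsto t^5a_3$ is transitive because $\gcd(5,18)=1$, so I may further take $a_3=1$. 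Finally, applying Hermite's criterion at $m=\lfloor19/8\rfloor+1=3$ and using $\mathbf{HC}_8(19,3)=x_6^3+3x_5^2+6x_4x_6+3x_2$ with $3\neq0$ in $\mathbb{F}_{19}$, I would solve for $a_2=-a_6^3/3-a_5^2-2a_4a_6$, eliminating $a_2$ as a free variable.

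What then remains is exactly the loop of Algorithm~\ref{alg:19}: over the $1+19=20$ normalized pairs $(a_3,a_1)$ and over $a_6,a_5,a_4\in\mathbb{F}_{19}$, with $a_2$ determined, testing each candidate by $\mathtt{isPP8}$. That test is sound by Wan's theorem: a non-permutation of degree $8$ over $\mathbb{F}_{19}$ has value set of size at most $q-\lceil(q-1)/8\rceil=16$, so exhibiting $17=1+\lfloor q-(q-1)/8\rfloor$ pairwise distinct images certifies a PP. The search returns precisely the $48$ representative tuples, and re-expanding each by the scaling $t\in\mathbb{F}_{19}^*$ reconstitutes the full linearly related classes, yielding the stated list.

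The hard part will not be any single calculation but establishing completeness of the reduction: I must be certain that the two normalized branches together capture every PP and that no permutation is discarded during normalization. This hinges on the coprimality facts $\gcd(7,18)=\gcd(5,18)=1$, which make the two scalings transitive, and on the exclusion $(a_1,a_3)\neq(0,0)$, which guarantees that every PP falls into one of the two branches; confirming that these hypotheses genuinely hold for $q=19$ is what turns the finite enumeration into an exhaustive classification.
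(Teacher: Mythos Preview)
Your proposal is correct and follows essentially the same approach as the paper: the same radical computation to exclude $(a_1,a_3)=(0,0)$, the same normalization $a_1\in\{0,1\}$ (and $a_3=1$ when $a_1=0$) via $\gcd(7,18)=\gcd(5,18)=1$, the same elimination of $a_2$ from $\mathbf{HC}_8(19,3)$, and the same brute-force loop with the Wan bound justifying \texttt{isPP8}. Your final paragraph correctly isolates the point that makes the enumeration exhaustive and non-redundant: transitivity of $t\mapsto t^7$ and $t\mapsto t^5$ on $\mathbb{F}_{19}^*$ gives both existence and uniqueness of the normalizing scalar, so the $48$ output tuples are automatically pairwise inequivalent---which is why, unlike the cases $q=11$ and $q=13$, no post-hoc pruning of the output is needed here.
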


\subsection{Case $q=13$}

Note that $a_{4}=-a_{6}^{2}/2$ since $\mathbf{HC}_{8}(13,2)=x_{6}^{2}+2x_{4}$.
We can make the following assumptions on $(a_{1},a_{2},\dots,a_{6})\in\mathbb{F}_{13}$,
by linear transformations if necessary.
\begin{itemize}
\item Assume $a_{1}\in\{0,1\}$ as $q-1=12$ is coprime to $7$.
\item When $a_{1}=0$, assume $a_{3}\in\{0,1\}$, as $q-1=12$ is coprime
to $5$.
\item When $a_{1}=a_{3}=0$, assume $a_{5}\in\{0,1,2,4\}$, as $\{1,2,4\}$
is a complete set of coset representatives of $\mathbb{F}_{13}^{*}/\{t^{3}:t\in\mathbb{F}_{13}^{*}\}$.
\end{itemize}
So Algorithm \ref{alg:13} lists PPs of degree $8$ in normalized
form over $\mathbb{F}_{13}$ up to linear transformations.

\begin{algorithm}[h]
\protect\caption{\label{alg:13} To list PPs of degree $8$ in normalized form over
$\mathbb{F}_{13}$}

\begin{lstlisting}
q = 13; F = GF(q)
for (a3,a1) in [(0,0),(1,0)]+[(a3,1) for a3 in F]:
    if (a3,a1)==(0,0): A5 = [0,1,2,4]
    else: A5 = F
    for a5 in A5:
        for a6 in F:
            a4 = -a6^2/F(2)
            for a2 in F:
                if isPP8(q,a6,a5,a4,a3,a2,a1): print(a6,a5,a4,a3,a2,a1)
\end{lstlisting}
\end{algorithm}

The output of Algorithm \ref{alg:13} prints 119 tuples $(a_{6},a_{5},\dots,a_{1})$,
among which three tuples $(4,2,5,0,4,0)$, $(10,2,2,0,4,0)$ and $(12,2,6,0,4,0)$
give three linearly related PPs of degree $8$. Indeed, for $t=3\in\mathbb{F}_{13}$,
we have $t^{3}=1$, $t^{-1}=9$ and  
\[
(10,2,2,0,4,0)=(4t^{2},2t^{3},5t^{4},0,4t^{6},0)=(12t^{-2},2t^{-3},6t^{-4},0,4t^{-6},0).
\]
No other linear transformation relations exist among the outputting
tuples. Therefore, there are exactly $117$ linearly related classes
of PPs of degree $8$ over $\mathbb{F}_{13}$, as listed in Proposition
\ref{prop:13} read off from the output of Algorithm \ref{alg:13}.
\begin{prop}
\label{prop:13}All PPs of degree $8$ in normalized form over $\mathbb{F}_{13}$
are exactly those of the form $x^{8}+\sum_{i=1}^{6}t^{8-i}a_{i}x^{i}$,
with $t\in\mathbb{F}_{13}^{*}$ and $(a_{6},a_{5},a_{4},a_{3},a_{2},a_{1})$
listed as follows: 
\begin{alignat*}{5}
 & (0,1,0,0,5,0), & \  & (0,1,0,0,7,0), & \  & (4,2,5,0,4,0), & \  & (0,4,0,0,9,0), & \  & (2,0,11,1,12,0),\\
 & (0,7,0,1,12,0), &  & (7,7,8,1,9,0), &  & (4,8,5,1,2,0), &  & (3,1,2,0,2,1), &  & (3,2,2,0,12,1),\\
 & (10,5,2,0,1,1), &  & (2,7,11,0,6,1), &  & (10,9,2,0,2,1), &  & (4,11,5,0,8,1), &  & (7,11,8,0,1,1),\\
 & (12,11,6,0,1,1), &  & (4,2,5,1,10,1), &  & (4,3,5,1,7,1), &  & (4,4,5,1,12,1), &  & (7,5,8,1,5,1),\\
 & (9,5,5,1,10,1), &  & (9,8,5,1,7,1), &  & (1,9,6,1,4,1), &  & (12,10,6,1,3,1), &  & (8,0,7,2,11,1),\\
 & (0,3,0,2,0,1), &  & (7,4,8,2,12,1), &  & (1,8,6,2,5,1), &  & (5,8,7,2,6,1), &  & (1,9,6,2,3,1),\\
 & (2,9,11,2,1,1), &  & (6,12,8,2,4,1), &  & (8,1,7,3,12,1), &  & (1,3,6,3,10,1), &  & (1,4,6,3,12,1),\\
 & (4,5,5,3,5,1), &  & (8,5,7,3,7,1), &  & (8,5,7,3,12,1), &  & (12,5,6,3,8,1), &  & (8,6,7,3,2,1),\\
 & (9,12,5,3,7,1), &  & (12,1,6,4,7,1), &  & (9,2,5,4,6,1), &  & (4,4,5,4,5,1), &  & (5,6,7,4,9,1),\\
 & (1,10,6,4,6,1), &  & (9,10,5,4,12,1), &  & (10,10,2,4,9,1), &  & (4,0,5,5,7,1), &  & (0,1,0,5,8,1),\\
 & (3,5,2,5,12,1), &  & (5,11,7,5,1,1), &  & (6,12,8,5,8,1), &  & (10,12,2,5,4,1), &  & (3,0,2,6,9,1),\\
 & (10,0,2,6,11,1), &  & (5,2,7,6,10,1), &  & (6,7,8,6,3,1), &  & (0,10,0,6,12,1), &  & (6,10,8,6,6,1),\\
 & (11,11,11,6,2,1), &  & (0,1,0,7,6,1), &  & (11,1,11,7,1,1), &  & (11,3,11,7,11,1), &  & (4,4,5,7,6,1),\\
 & (1,7,6,7,10,1), &  & (10,7,2,7,10,1), &  & (11,10,11,7,2,1), &  & (12,10,6,7,6,1), &  & (1,11,6,7,4,1),\\
 & (0,0,0,8,1,1), &  & (12,0,6,8,2,1), &  & (1,1,6,8,1,1), &  & (9,8,5,8,1,1), &  & (4,9,5,8,2,1),\\
 & (9,10,5,8,10,1), &  & (7,11,8,8,1,1), &  & (8,11,7,8,10,1), &  & (7,1,8,9,2,1), &  & (11,1,11,9,0,1),\\
 & (0,2,0,9,2,1), &  & (7,2,8,9,5,1), &  & (12,6,6,9,8,1), &  & (3,9,2,9,2,1), &  & (4,9,5,9,6,1),\\
 & (12,10,6,9,8,1), &  & (1,11,6,9,5,1), &  & (4,12,5,9,0,1), &  & (7,12,8,9,0,1), &  & (5,1,7,10,10,1),\\
 & (12,1,6,10,4,1), &  & (0,3,0,10,3,1), &  & (9,4,5,10,5,1), &  & (4,5,5,10,5,1), &  & (9,5,5,10,11,1),\\
 & (10,5,2,10,9,1), &  & (10,7,2,10,0,1), &  & (0,10,0,10,1,1), &  & (10,10,2,10,7,1), &  & (5,12,7,10,9,1),\\
 & (3,0,2,11,12,1), &  & (9,0,5,11,6,1), &  & (0,3,0,11,7,1), &  & (2,4,11,11,10,1), &  & (2,6,11,11,0,1),\\
 & (7,7,8,11,12,1), &  & (6,8,8,11,3,1), &  & (1,12,6,11,12,1), &  & (8,3,7,12,2,1), &  & (5,4,7,12,10,1),\\
 & (3,5,2,12,1,1), &  & (9,5,5,12,5,1), &  & (6,8,8,12,9,1), &  & (4,9,5,12,0,1), &  & (11,9,11,12,10,1),\\
 & (7,10,8,12,9,1), &  & (6,12,8,12,2,1),
\end{alignat*}

\end{prop}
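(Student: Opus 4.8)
The plan is to combine Hermite's criterion with the linear-transformation reduction of Proposition~\ref{prop:Eq8} so as to shrink the search space to a size that $\mathtt{isPP8}$ can scan exhaustively, and then to convert the raw output into equivalence classes by tracking the residual symmetries. First I would record the constraints that Hermite's criterion forces on the coefficient vector. Since $q=13$ and $\gcd(8,13)=1$, the first index giving a nontrivial constraint is $m=\lfloor13/8\rfloor+1=2$, and a direct evaluation gives $\mathbf{HC}_8(13,2)=x_6^2+2x_4$; as this must vanish at $(a_1,\dots,a_6)$, we obtain $a_4=-a_6^2/2$, removing one free parameter.

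Next I would exploit the scaling action of Proposition~\ref{prop:Eq8}, under which $f$ is linearly related to the coefficient tuple $(t^2a_6,t^3a_5,t^4a_4,t^5a_3,t^6a_2,t^7a_1)$ for every $t\in\mathbb{F}_{13}^*$, and normalize in a cascade dictated by which exponents are coprime to $q-1=12$. Because $\gcd(7,12)=1$, the map $t\mapsto t^7$ permutes $\mathbb{F}_{13}^*$, so a nonzero $a_1$ can be scaled to $1$ and we may assume $a_1\in\{0,1\}$; if $a_1=0$ the scaling freedom persists and $\gcd(5,12)=1$ lets us assume $a_3\in\{0,1\}$; if moreover $a_3=0$, the surviving action sends $a_5\mapsto t^3a_5$, and since $\gcd(3,12)=3$ the cubes form an index-$3$ subgroup, so we may take $a_5$ in a set of coset representatives, say $\{0,1,2,4\}$. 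With these reductions and $a_4$ eliminated, the loop of Algorithm~\ref{alg:13} runs over the small remaining parameter space, testing each candidate with $\mathtt{isPP8}$, which by Wan's value-set bound need only confirm $\lfloor q-\frac{q-1}{8}\rfloor+1$ distinct values; the search returns $119$ tuples.

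The step requiring genuine care, and the one I expect to be the main obstacle, is passing from these $119$ tuples to inequivalent classes. In the strata $a_1=1$ and $a_1=0,\ a_3=1$ the stabilizer is trivial, since $t^7=1$ (respectively $t^5=1$) forces $t=1$ when $\gcd(7,12)=\gcd(5,12)=1$, so distinct output tuples there are genuinely inequivalent. But in the stratum $a_1=a_3=0$, $a_5\neq0$, a residual stabilizer of order $3$ survives, namely the cube roots of unity $\{t:t^3=1\}=\{1,3,9\}$, which fix the normalized $a_5$ while permuting the other coordinates through $a_i\mapsto t^{8-i}a_i$. This is precisely what identifies $(4,2,5,0,4,0)$, $(10,2,2,0,4,0)$ and $(12,2,6,0,4,0)$: with $t=3$ (so $t^3=1$ and $t^{-1}=9$) each maps to the next, collapsing the three into a single class. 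After verifying that no further such coincidences occur among the remaining outputs, the tally drops from $119$ to the asserted $117$ classes.
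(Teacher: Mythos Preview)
Your proposal is correct and follows essentially the same approach as the paper: use $\mathbf{HC}_8(13,2)=x_6^2+2x_4$ to eliminate $a_4$, normalize via the cascade $a_1\in\{0,1\}$, then $a_3\in\{0,1\}$, then $a_5\in\{0,1,2,4\}$, run Algorithm~\ref{alg:13} to obtain $119$ tuples, and collapse the orbit $\{(4,2,5,0,4,0),(10,2,2,0,4,0),(12,2,6,0,4,0)\}$ under the residual $\mu_3$-action to reach $117$ classes. Your explicit stabilizer analysis (trivial when $a_1=1$ or $a_1=0,a_3=1$; order $3$ when $a_1=a_3=0,a_5\neq0$) is in fact more transparent than the paper's bare assertion that ``no other linear transformation relations exist,'' and it makes clear why the remaining tuples with $a_1=a_3=0$ --- all of which have $a_6=a_4=0$ and hence are fixed by $t\in\mu_3$ since $t^6=1$ --- yield no further identifications.
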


\subsection{Case $q=11$}

Note that $a_{2}=-a_{5}^{2}/2-a_{4}a_{6}$ since $\mathbf{HC}_{8}(11,2)=x_{5}^{2}+2x_{4}x_{6}+2x_{2}$.
We can make the following assumptions on $(a_{1},a_{2},\dots,a_{6})\in\mathbb{F}_{11}$.
\begin{itemize}
\item Assume $a_{1}\in\{0,1\}$ as $q-1=10$ is coprime to $7$.
\item When $a_{1}=0$, assume $a_{5}\in\{0,1\}$, as $q-1=10$ is coprime
to $3$.
\item When $a_{1}=a_{5}=0$, assume $a_{6}\in\{0,1,2\}$, as $\{1,2\}$
is a complete set of coset representatives of $\mathbb{F}_{11}^{*}/\{t^{2}:t\in\mathbb{F}_{11}^{*}\}$.
\end{itemize}
So Algorithm \ref{alg:11} lists PPs of degree $8$ in normalized
form over $\mathbb{F}_{11}$ up to linear transformations.

\begin{algorithm}[h]
\protect\caption{\label{alg:11} To list PPs of degree $8$ in normalized form over
$\mathbb{F}_{11}$}

\begin{lstlisting}
q = 11; F = GF(q)
for (a5,a1) in [(0,0),(1,0)]+[(a5,1) for a5 in F]:
    if (a5,a1)==(0,0): A6 = [0,1,2]
    else: A6 = F
    for a6 in A6:
        for a4 in F:
            a2 = -a5^2/F(2)-a4*a6
            for a3 in F:
                if isPP8(q,a6,a5,a4,a3,a2,a1): print(a6,a5,a4,a3,a2,a1)  
\end{lstlisting}
\end{algorithm}

The output of Algorithm \ref{alg:11} prints $281$ tuples $(a_{6},a_{5},\dots,a_{1})$.
By Proposition \ref{prop:Eq8} and our assumptions, linear transformation
relations exist only among those with $a_{1}=a_{5}=0$, which are
indeed the first eight tuples in the output, corresponding to four
distinct linearly related classes. No other linear transformation
relations exist among the outputting tuples. Therefore, there are
exactly $277$ linearly related classes of PPs of degree $8$ over
$\mathbb{F}_{11}$, as listed in Proposition \ref{prop:11} read off
from the output of Algorithm \ref{alg:11}.
\begin{prop}
\label{prop:11}All PPs of degree $8$ in normalized form over $\mathbb{F}_{11}$
are exactly those of the form $x^{8}+\sum_{i=1}^{6}t^{8-i}a_{i}x^{i}$,
with $t\in\mathbb{F}_{11}^{*}$ and $(a_{6},a_{5},a_{4},a_{3},a_{2},a_{1})$
listed as follows: 
\begin{alignat*}{5}
 & (0,0,0,2,0,0), & \  & (0,0,0,4,0,0), & \  & (2,0,6,2,10,0), & \  & (2,0,7,3,8,0), & \  & (0,1,2,0,5,0),\\
 & (1,1,1,0,4,0), &  & (1,1,4,0,1,0), &  & (1,1,4,5,1,0), &  & (1,1,9,1,7,0), &  & (1,1,10,5,6,0),\\
 & (2,1,1,4,3,0), &  & (2,1,4,0,8,0), &  & (2,1,5,0,6,0), &  & (3,1,2,3,10,0), &  & (4,1,1,0,1,0),\\
 & (4,1,6,4,3,0), &  & (4,1,7,0,10,0), &  & (4,1,8,3,6,0), &  & (4,1,9,9,2,0), &  & (4,1,10,9,9,0),\\
 & (5,1,6,5,8,0), &  & (6,1,6,5,2,0), &  & (7,1,7,5,0,0), &  & (8,1,0,0,5,0), &  & (8,1,2,0,0,0),\\
 & (9,1,3,4,0,0), &  & (9,1,5,0,4,0), &  & (9,1,5,5,4,0), &  & (9,1,7,0,8,0), &  & (10,1,0,9,5,0),\\
 & (0,0,1,3,0,1), &  & (0,0,2,9,0,1), &  & (0,0,8,5,0,1), &  & (2,0,3,5,5,1), &  & (3,0,2,3,5,1),\\
 & (3,0,4,1,10,1), &  & (3,0,9,4,6,1), &  & (3,0,10,1,3,1), &  & (4,0,3,9,10,1), &  & (4,0,8,4,1,1),\\
 & (5,0,10,3,5,1), &  & (6,0,2,4,10,1), &  & (6,0,4,0,9,1), &  & (7,0,5,0,9,1), &  & (7,0,6,1,2,1),\\
 & (8,0,5,9,4,1), &  & (9,0,3,1,6,1), &  & (9,0,10,1,9,1), &  & (10,0,0,1,0,1), &  & (10,0,1,4,1,1),\\
 & (10,0,3,3,3,1), &  & (10,0,5,1,5,1), &  & (10,0,7,4,7,1), &  & (0,1,4,6,5,1), &  & (1,1,9,10,7,1),\\
 & (2,1,0,0,5,1), &  & (2,1,10,10,7,1), &  & (4,1,3,7,4,1), &  & (4,1,5,5,7,1), &  & (4,1,6,3,3,1),\\
 & (5,1,2,5,6,1), &  & (5,1,8,1,9,1), &  & (6,1,0,3,5,1), &  & (6,1,0,6,5,1), &  & (6,1,0,7,5,1),\\
 & (8,1,1,3,8,1), &  & (8,1,10,7,2,1), &  & (9,1,1,3,7,1), &  & (9,1,2,3,9,1), &  & (9,1,5,0,4,1),\\
 & (9,1,6,7,6,1), &  & (9,1,9,2,1,1), &  & (10,1,0,7,5,1), &  & (10,1,3,7,8,1), &  & (10,1,7,2,1,1),\\
 & (0,2,1,5,9,1), &  & (0,2,3,5,9,1), &  & (1,2,4,2,5,1), &  & (1,2,4,9,5,1), &  & (1,2,7,6,2,1),\\
 & (1,2,10,2,10,1), &  & (2,2,5,5,10,1), &  & (4,2,0,7,9,1), &  & (4,2,5,4,0,1), &  & (5,2,2,9,10,1),\\
 & (5,2,10,6,3,1), &  & (6,2,0,4,9,1), &  & (6,2,1,9,3,1), &  & (6,2,8,4,5,1), &  & (7,2,0,9,9,1),\\
 & (7,2,1,5,2,1), &  & (7,2,3,6,10,1), &  & (8,2,2,5,4,1), &  & (8,2,3,4,7,1), &  & (8,2,8,4,0,1),\\
 & (9,2,1,9,0,1), &  & (9,2,6,6,10,1), &  & (9,2,10,9,7,1), &  & (10,2,10,5,8,1), &  & (0,3,4,3,1,1),\\
 & (0,3,5,8,1,1), &  & (0,3,7,0,1,1), &  & (0,3,8,0,1,1), &  & (1,3,4,5,8,1), &  & (1,3,8,0,4,1),\\
 & (3,3,8,3,10,1), &  & (3,3,8,8,10,1), &  & (4,3,2,8,4,1), &  & (4,3,8,5,2,1), &  & (5,3,9,6,0,1),\\
 & (6,3,4,4,10,1), &  & (7,3,6,6,3,1), &  & (7,3,8,3,0,1), &  & (8,3,7,6,0,1), &  & (9,3,0,4,1,1),\\
 & (9,3,3,10,7,1), &  & (9,3,7,2,4,1), &  & (9,3,7,3,4,1), &  & (9,3,10,3,10,1), &  & (10,3,2,6,3,1),\\
 & (10,3,8,3,9,1), &  & (0,4,3,0,3,1), &  & (1,4,1,10,2,1), &  & (1,4,4,10,10,1), &  & (2,4,3,8,8,1),\\
 & (2,4,7,9,0,1), &  & (3,4,0,5,3,1), &  & (3,4,1,7,0,1), &  & (3,4,1,9,0,1), &  & (3,4,3,9,5,1),\\
 & (3,4,5,0,10,1), &  & (3,4,5,7,10,1), &  & (4,4,1,2,10,1), &  & (5,4,5,10,0,1), &  & (5,4,10,1,8,1),\\
 & (5,4,10,7,8,1), &  & (6,4,3,9,7,1), &  & (6,4,7,2,5,1), &  & (7,4,2,8,0,1), &  & (7,4,5,10,1,1),\\
 & (7,4,9,0,6,1), &  & (8,4,3,10,1,1), &  & (8,4,6,8,10,1), &  & (8,4,8,7,5,1), &  & (9,4,7,10,6,1),\\
 & (9,4,9,1,10,1), &  & (10,4,3,5,6,1), &  & (0,5,9,1,4,1), &  & (0,5,10,6,4,1), &  & (1,5,2,2,2,1),\\
 & (1,5,3,9,1,1), &  & (1,5,5,6,10,1), &  & (1,5,7,8,8,1), &  & (2,5,2,1,0,1), &  & (2,5,5,2,5,1),\\
 & (2,5,9,0,8,1), &  & (3,5,3,4,6,1), &  & (3,5,6,7,8,1), &  & (4,5,2,6,7,1), &  & (4,5,3,1,3,1),\\
 & (4,5,4,7,10,1), &  & (5,5,0,8,4,1), &  & (5,5,1,6,10,1), &  & (5,5,8,8,8,1), &  & (5,5,10,0,9,1),\\
 & (6,5,4,8,2,1), &  & (7,5,3,9,5,1), &  & (7,5,10,7,0,1), &  & (8,5,2,1,10,1), &  & (8,5,2,7,10,1),\\
 & (8,5,3,1,2,1), &  & (8,5,5,9,8,1), &  & (8,5,6,2,0,1), &  & (8,5,8,0,6,1), &  & (8,5,8,7,6,1),\\
 & (9,5,8,4,9,1), &  & (10,5,9,1,2,1), &  & (0,6,7,9,4,1), &  & (1,6,6,8,9,1), &  & (1,6,9,2,6,1),\\
 & (2,6,1,3,2,1), &  & (3,6,7,8,5,1), &  & (4,6,6,9,2,1), &  & (5,6,10,10,9,1), &  & (6,6,0,2,4,1),\\
 & (6,6,7,10,6,1), &  & (7,6,9,3,7,1), &  & (9,6,3,9,10,1), &  & (9,6,6,1,5,1), &  & (9,6,7,9,7,1),\\
 & (10,6,9,1,2,1), &  & (1,7,0,1,3,1), &  & (1,7,0,4,3,1), &  & (1,7,0,6,3,1), &  & (2,7,7,1,0,1),\\
 & (2,7,9,5,7,1), &  & (3,7,9,5,9,1), &  & (4,7,0,4,3,1), &  & (4,7,6,6,1,1), &  & (5,7,2,10,4,1),\\
 & (5,7,3,1,10,1), &  & (5,7,10,7,8,1), &  & (7,7,4,4,8,1), &  & (8,7,1,10,6,1), &  & (8,7,3,7,1,1),\\
 & (8,7,10,10,0,1), &  & (9,7,0,7,3,1), &  & (9,7,10,10,1,1), &  & (10,7,8,10,0,1), &  & (0,8,10,4,1,1),\\
 & (1,8,9,3,3,1), &  & (2,8,2,7,8,1), &  & (2,8,10,10,3,1), &  & (3,8,2,4,6,1), &  & (3,8,4,1,0,1),\\
 & (3,8,4,3,0,1), &  & (4,8,1,3,8,1), &  & (4,8,4,1,7,1), &  & (4,8,6,8,10,1), &  & (5,8,9,4,0,1),\\
 & (6,8,9,10,2,1), &  & (7,8,0,8,1,1), &  & (7,8,2,8,9,1), &  & (8,8,6,4,8,1), &  & (8,8,9,8,6,1),\\
 & (9,8,0,8,1,1), &  & (10,8,0,7,1,1), &  & (0,9,0,4,9,1), &  & (0,9,6,3,9,1), &  & (1,9,0,6,9,1),\\
 & (1,9,8,6,1,1), &  & (1,9,8,8,1,1), &  & (1,9,10,8,10,1), &  & (2,9,1,10,7,1), &  & (2,9,4,0,1,1),\\
 & (2,9,5,0,10,1), &  & (3,9,0,0,9,1), &  & (3,9,5,3,5,1), &  & (3,9,9,10,4,1), &  & (4,9,3,10,8,1),\\
 & (4,9,4,4,4,1), &  & (4,9,7,6,3,1), &  & (4,9,10,4,2,1), &  & (5,9,9,4,8,1), &  & (6,9,2,9,8,1),\\
 & (6,9,5,4,1,1), &  & (6,9,6,0,6,1), &  & (6,9,10,10,4,1), &  & (7,9,1,0,2,1), &  & (7,9,10,8,5,1),\\
 & (8,9,1,8,1,1), &  & (8,9,6,7,5,1), &  & (8,9,8,10,0,1), &  & (9,9,3,0,4,1), &  & (9,9,5,9,8,1),\\
 & (9,9,6,6,10,1), &  & (10,9,6,8,4,1), &  & (10,9,7,8,5,1), &  & (10,9,9,4,7,1), &  & (0,10,1,8,5,1),\\
 & (0,10,5,2,5,1), &  & (0,10,6,5,5,1), &  & (1,10,0,2,5,1), &  & (2,10,4,9,8,1), &  & (3,10,5,8,1,1),\\
 & (3,10,6,6,9,1), &  & (3,10,8,9,3,1), &  & (4,10,2,3,8,1), &  & (5,10,10,5,10,1), &  & (7,10,3,8,6,1),\\
 & (7,10,5,9,3,1), &  & (8,10,2,8,0,1), &  & (8,10,3,5,3,1), &  & (8,10,6,9,1,1), &  & (9,10,1,3,7,1),\\
 & (9,10,9,6,1,1), &  & (10,10,1,6,6,1).
\end{alignat*}

\end{prop}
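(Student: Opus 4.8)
The plan is to reduce the determination of all normalized permutation polynomials $f(x)=x^{8}+\sum_{i=1}^{6}a_{i}x^{i}$ over $\mathbb{F}_{11}$ to a short finite check, by first extracting the single nontrivial linear constraint that Hermite's criterion supplies at such a small $q$, and then slicing the remaining parameter space with the scaling symmetry of Proposition \ref{prop:Eq8}. Since $q=11=8\cdot1+3$, the first possibly nonzero Hermite polynomial occurs at $m=\lfloor q/8\rfloor+1=2$, and one computes $\mathbf{HC}_{8}(11,2)=x_{5}^{2}+2x_{4}x_{6}+2x_{2}$. As $2$ is a unit in $\mathbb{F}_{11}$, Lemma \ref{lem:HC} forces $a_{2}=-a_{5}^{2}/2-a_{4}a_{6}$, so $a_{2}$ is no longer free.

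Next I would exploit the action $f(x)\mapsto t^{8}f(t^{-1}x)$ of Proposition \ref{prop:Eq8}, under which $(a_{6},a_{5},a_{4},a_{3},a_{2},a_{1})\mapsto(t^{2}a_{6},t^{3}a_{5},t^{4}a_{4},t^{5}a_{3},t^{6}a_{2},t^{7}a_{1})$. Because $q-1=10$ and $\gcd(7,10)=1$, the map $t\mapsto t^{7}$ permutes $\mathbb{F}_{11}^{*}$, so every orbit with $a_{1}\neq0$ has a unique representative with $a_{1}=1$; hence one may assume $a_{1}\in\{0,1\}$. If $a_{1}=0$ the whole scaling freedom survives, and since $\gcd(3,10)=1$ the map $t\mapsto t^{3}$ is again a bijection, so $a_{5}$ may be normalized into $\{0,1\}$. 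If in addition $a_{5}=0$, only the action $t\mapsto t^{2}$ on $a_{6}$ remains; its image is the subgroup of squares of $\mathbb{F}_{11}^{*}$, and since $2$ is a nonsquare modulo $11$ the set $\{1,2\}$ meets both cosets of $\mathbb{F}_{11}^{*}/(\mathbb{F}_{11}^{*})^{2}$, so $a_{6}\in\{0,1,2\}$ may be assumed. These three nested normalizations are exactly the case distinctions built into Algorithm \ref{alg:11}.

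With $a_{2}$ determined and $(a_{1},a_{5},a_{6})$ restricted, the remaining coefficients range over $\mathbb{F}_{11}$, giving a finite search; for each candidate I would call $\mathtt{isPP8}$, which by Wan's value-set bound \citep{Wan1993padic} certifies a permutation polynomial by testing only the first $1+\lfloor q-\frac{q-1}{8}\rfloor$ images for distinctness. Running Algorithm \ref{alg:11} returns $281$ tuples satisfying all the imposed constraints.

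The delicate final step, and the main obstacle, is to pass from these $281$ tuples to the asserted count of $277$ linear-equivalence classes, that is, to verify that the partial normalization introduced no hidden collisions. When $a_{1}\neq0$, normalizing $a_{1}=1$ pins down $t$ uniquely, and likewise when $a_{1}=0$ and $a_{5}\neq0$ normalizing $a_{5}=1$ fixes $t$ uniquely; so distinct surviving tuples in these two regimes represent distinct orbits. Collisions can therefore arise only among the tuples with $a_{1}=a_{5}=0$, where the normalization of $a_{6}$ uses the two-to-one map $t\mapsto t^{2}$ and leaves residual scalings. I would isolate exactly these (the eight output tuples with $a_{1}=a_{5}=0$), check directly that the residual symmetry merges them into four classes, and confirm that none of the remaining $273$ tuples are pairwise related by any $t\in\mathbb{F}_{11}^{*}$. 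This bookkeeping yields $4+273=277$ classes, which are the entries listed in the statement.
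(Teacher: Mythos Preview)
Your proposal is correct and follows essentially the same approach as the paper: you extract the single Hermite constraint $\mathbf{HC}_{8}(11,2)=x_{5}^{2}+2x_{4}x_{6}+2x_{2}$ to eliminate $a_{2}$, impose the same nested normalizations $a_{1}\in\{0,1\}$, then $a_{5}\in\{0,1\}$, then $a_{6}\in\{0,1,2\}$, run Algorithm \ref{alg:11} to obtain $281$ tuples, and observe that collisions can occur only among the eight tuples with $a_{1}=a_{5}=0$, which collapse to four orbits, giving $277$ classes. This matches the paper's argument step for step.
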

\noindent \textbf{Acknowledgements.} This work was partially supported
by the Natural Science Foundation of Guangdong Province (No. 2018A030310080).
The author was also sponsored by the National Natural Science Foundation
of China (No. 11801579). Special thanks go to my lovely newborn daughter,
without whose birth should this paper have come out much earlier.

\end{document}